\documentclass[11pt,reqno]{amsart}
\usepackage[margin=1.08in]{geometry}
\usepackage{amsmath,amssymb,amsthm}
\usepackage{enumitem}
\usepackage{hyperref}
\usepackage{tikz}
\usetikzlibrary{arrows.meta}
\hypersetup{colorlinks=true,linkcolor=blue,citecolor=blue,urlcolor=blue}

\theoremstyle{plain}
\newtheorem{theorem}{Theorem}[section]
\newtheorem{lemma}[theorem]{Lemma}
\newtheorem{proposition}[theorem]{Proposition}
\newtheorem{corollary}[theorem]{Corollary}
\newtheorem{conjecture}[theorem]{Conjecture}

\theoremstyle{definition}
\newtheorem{definition}[theorem]{Definition}
\newtheorem{remark}[theorem]{Remark}

\newcommand{\R}{\mathbb{R}}
\newcommand{\C}{\mathbb{C}}

\DeclareMathOperator{\RP}{\mathbb{RP}}

\title[Visibility cliques and dense orchard cores]{Visibility cliques, cubic containers,\\ and dense orchard cores}

\author{Sohail (Neel) Sarkar}
\thanks{University of Toronto. Email: \texttt{sohail.sarkar@mail.utoronto.ca}.}

\date{April 30, 2026}
\subjclass[2020]{52C10, 52C35, 05D10, 14H50}
\keywords{visibility graph, ordinary lines, Big-Line-Big-Clique Conjecture, cubic curve, Green--Tao structure theorem, blockers}

\begin{document}

\begin{abstract}
The Big-Line--Big-Clique Conjecture of K\'ara, P\'or and Wood asserts that, for every fixed $k$ and $\ell$, every sufficiently large finite planar point set contains either $k$ collinear points or $\ell$ pairwise visible points. We prove a quantitative form in two structured regimes and isolate a structural obstruction to the full conjecture.

The main result is a deterministic cubic-container theorem. If $A\subset\R^2$ has $n$ points, no $k$ collinear points, and all but $s$ points of $A$ lie on a real cubic, then the cubic-supported part of $A$ has a visible clique cover of size $O_k(s+1)$; in particular $V(A)$ contains a clique of size $\Omega_k(n/(s+1))$, unless the cubic is the excluded three-line case containing only $O_k(1)$ points. Combining this with the Green--Tao structure theorem, we obtain that every $n$-point set with no $k$ collinear points and at most $Kn$ ordinary lines contains a visible clique of size $\Omega_{k,K}(n)$; more strongly, all but $O_K(1)$ points can be partitioned into $O_{k,K}(1)$ mutually visible sets.

We also combine the cubic-container theorem with the Elekes--Szab\'o theorem on triple lines and cubic curves to prove the Big-Line--Big-Clique conclusion for point sets contained in any fixed irreducible algebraic curve. Finally, we prove a dense-orchard core lemma showing that the absence of a visible $K_\ell$ forces a positive-density subset in which every point lies on linearly many $3$-rich lines, and we give a sharp one-blocker example showing why ambient blockers cannot be ignored.
\end{abstract}

\maketitle

\section{Introduction}

Let $A\subset\R^2$ be finite. Two points $p,q\in A$ are \emph{visible in $A$} if the open segment $(pq)^\circ$ contains no other point of $A$. The \emph{visibility graph} $V(A)$ has vertex set $A$ and edges the visible pairs. A mutually visible subset of $A$ is precisely a clique of $V(A)$.

K\'ara, P\'or and Wood~\cite{KPW2005} posed the following Ramsey-type problem, now commonly called the Big-Line--Big-Clique Conjecture.

\begin{conjecture}[Big-Line--Big-Clique Conjecture]\label{conj:BLBC}
For every pair of integers $k,\ell\ge 2$ there is $n_0(k,\ell)$ such that every set $A\subset\R^2$ with $|A|\ge n_0(k,\ell)$ contains either $k$ collinear points or $\ell$ mutually visible points.
\end{conjecture}

The conjecture is trivial for $\ell\le3$ and is known through the empty-pentagon method for $\ell\le5$ in the notation of Conjecture~\ref{conj:BLBC}~\cite{Abel2011}. The infinite analogue is false~\cite{PorWood2010}, and the stronger visible-island variant is false even for finite point sets~\cite{LNS2022}. The standard extremal-graph first step is insufficient: if $V(A)$ has no $K_\ell$, then Tur\'an's theorem gives many non-visible pairs, hence many collinear triples, but this does not by itself produce the algebraic structure needed to force a visible clique. The interplay between visibility and external blockers is studied systematically by P\'or and Wood~\cite{PorWoodBlockers}. Green's Problem~69 asks for a cubic-structure theorem from many collinear triples; Green's Problem~70 is the Big-Line--Big-Clique problem and explicitly notes this connection~\cite{GreenProblems}. The algebraic-incidence background around such cubic phenomena is surveyed in~\cite{deZeeuwSurvey}.  Our use of cubic curves is also informed by the recent incidence literature around polynomial and algebraic methods.  Sheffer's monograph~\cite{ShefferBook} gives a systematic account of polynomial-method techniques in incidence geometry; Raz, Sharir and de Zeeuw~\cite{RazSharirDeZeeuw2016} prove an Elekes--Szab\'o theorem for Cartesian-product incidences with group-like exceptional structure; and Solymosi~\cite{Solymosi2022} studies structural consequences of many collinear triples under additional combinatorial hypotheses.  The role of the present paper is complementary: once a cubic or low-degree algebraic container is available, we give a visibility-theoretic mechanism that controls ambient blockers.

This paper develops the visibility side of that cubic-structure route. The first contribution is a deterministic theorem that converts an ambient cubic container into visible cliques. Its proof is independent of the source of the cubic.

\begin{theorem}[Cubic-container theorem, numerical form]\label{thm:intro-cubic-informal}
Let $A\subset\R^2$ have $n$ points and no $k$ collinear points. Suppose a real cubic $\Gamma$ contains all but $s$ points of $A$. If $|A\cap\Gamma|>3(k-1)$, then the cubic-supported part $A\cap\Gamma$ has a visible clique cover in $A$ of size $O_k(s+1)$, and
\[
  \omega(V(A))\ge
  \max\left\{1,
  \min\left(
  \frac{n-s-(k-1)}{s+k},
  \frac{n-s-13}{15(s+1)}
  \right)\right\}.
\]
\end{theorem}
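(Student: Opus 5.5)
Throughout, write $B=A\cap\Gamma$, so that $|B|\ge n-s$, and let $S=A\setminus\Gamma$ be the $s$ ambient points off the cubic. The plan is to prove the clique-cover statement first and then read off the numerical bound from it by pigeonhole.

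\textbf{Reduction to the case where $\Gamma$ contains no line.} A line meets $A$ in at most $k-1$ points. So if $\Gamma$ is a union of three lines, then $|B|\le 3(k-1)$, and this case is excluded by hypothesis. Next suppose $\Gamma=L\cup C$ with $L$ a line and $C$ an irreducible conic, or $\Gamma$ is two lines counted appropriately. The line component then carries at most $k-1$ points, and we treat them as singletons in the cover. This costs $k-1$ cliques, and it is the source of the alternative term $\frac{n-s-(k-1)}{s+k}$ in the minimum. It leaves a part of $B$ on an irreducible conic or an irreducible cubic. In both cases every line meets the curve in at most $3$ points.

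\textbf{Blockers on the curve itself.} For $p,q\in B$ on an irreducible component $\Gamma_0$, the open segment $(pq)^\circ$ contains at most one further point of $\Gamma_0$: none if $\Gamma_0$ is a conic, and at most the third intersection point if $\Gamma_0$ is a cubic. To remove these internal blockers, we decompose the real locus of $\Gamma_0$ into $O(1)$ convex arcs. For a cubic this is at most two ovals/branches, split at inflection points (at most $3$ real inflections) and at singular points, giving some absolute number of arcs, say at most $15$. The $15$ in the denominator would come from counting these pieces. We may also have to exclude up to $13$ special points, such as inflection points, singularities, and points whose chords are tangent in a degenerate way; these become singletons, which accounts for the $n-s-13$. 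On a single strictly convex arc, the chord $pq$ meets the curve only at $p$ and $q$ within the convex hull of the arc. The delicate point is that the third intersection of line $pq$ with the cubic might still lie on $(pq)^\circ$ while belonging to a different arc. To rule this out, I would choose the arcs so that each lies in a region bounded by a tangent or asymptote line separating it from the rest of the curve. Points of $B$ on one such arc are then mutually visible with respect to $B$.

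\textbf{External blockers; this is the main obstacle.} Each $x\in S$ can block pairs $p,q$ on an arc only when $x\in(pq)^\circ$. Since every line meets the arc in at most two points, the chords through $x$ pair up arc points. Ordering the points of $B$ on a convex arc by parameter, I claim the pairs blocked by a single $x$ form a non-crossing "nested" matching pattern. More precisely, if $x$ lies inside the convex region cut off by the arc, the lines through $x$ induce an order-reversing involution on the arc. Consequently, splitting the arc at $O(1)$ cut points, namely the fixed point of the involution and the tangents from $x$, yields subarcs on which no pair is blocked by $x$. Doing this for all $s$ points of $S$ gives $O(s)$ cut points per arc. Hence the arcs split into $O(s+1)$ intervals, each free of both internal and external blockers, and each interval is therefore a clique of $V(A)$. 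Establishing that one point contributes only $O(1)$ cuts, with the constant matching $15(s+1)$ overall, is the step I expect to need the most care, including the case where $x$ lies outside the convex region (where I would use the two tangent lines from $x$).

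\textbf{Numerical conclusion.} Summing the pieces, $B$ minus at most $13$ special points is covered by at most $15(s+1)$ cliques. By pigeonhole, some clique has size at least $\frac{n-s-13}{15(s+1)}$. In the line-plus-conic case, the conic part is covered by $s+k$ cliques after discarding the at most $k-1$ line points, which gives $\frac{n-s-(k-1)}{s+k}$. Taking the worse of the two cases, and noting that a single point is always a clique, gives the stated lower bound on $\omega(V(A))$.
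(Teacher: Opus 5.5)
Your architecture matches the paper's: rule out three lines, cover the line points by singletons in the line-plus-conic case, split an irreducible cubic into at most $15$ arcs plus at most $13$ exceptional points, then control the $s$ off-cubic blockers arc by arc. However, neither of the two steps that carry the content is established.

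\textbf{The visibility-patch property.} You rightly worry that the third intersection of the line $pq$ with the cubic may lie in $(pq)^\circ$ on another arc. Your fix, arcs separated from the rest of the curve by a tangent or asymptote line, is never constructed, and it fails for natural arcs. On $y^2=x^3-x$, the arc from $(1,0)$ to the upper flex could only be separated by the single line through those two points, and that line meets the unbounded upper arc again near $x\approx 5.27$. The paper instead uses a connectedness argument. The third intersection point $r(s,t)$ varies continuously on the connected set $\{(s,t)\in I^2: s<t\}$. The set where $r(s,t)$ lies in the open chord is open; it is closed, because a limit cannot be an endpoint since chords of a flex-free arc are never tangent; and it is empty near the diagonal by local convexity. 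Hence it is empty. Likewise, your $13$ and $15$ are read off the statement rather than derived. The paper also removes vertical tangencies ($\le 6$) and points at infinity ($\le 3$). Without these removals, for instance, the flexless oval of a smooth cubic stays a closed curve with no linear order along which to cut.

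\textbf{The external-blocker step is wrong as described.} The chords blocked by one point $x$ all pass through $x$, so they pairwise cross rather than nest. For a blocker above the graph of a strictly convex $f$ (the only position from which it blocks anything), rotating a line about $x$ moves both intersection points in the same direction, and no tangent of the arc passes through $x$. So your ``fixed point'' and ``tangents from $x$'' cuts do not exist. Even granting three cuts per blocker, you would get about $15(3s+1)$ pieces, not the $15(s+1)$ your numerical conclusion asserts.

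The missing idea is the paper's local blocker-colouring lemma. Because no line meets a patch in three points, the pairs blocked by a single external point form a matching. So the non-visibility graph on a patch has maximum degree at most the number $b$ of external blockers, and greedy colouring gives $b+1$ visible classes. Here $b\le s$ on a cubic patch and $b\le s+a$ on the conic. The $a\le k-1$ points of $L$ stay in $A$ as blockers even though you cover them by singletons, which is exactly where $s+k$ comes from.

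Alternatively, your interval approach can be repaired on the cubic patches. They are graphs $y=f_i(x)$ over open intervals, so every blocker of a pair has abscissa strictly between theirs. Cutting at the at most $s$ blocker abscissae then gives at most $s+1$ contiguous visible blocks per patch.
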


The formal and sharper version is Theorem~\ref{thm:cubic-container}. In the irreducible case, after deleting at most $13$ exceptional points, the cubic splits into at most $15$ visibility patches; on each patch, the only possible blockers are the $s$ off-cubic points. In the line-plus-conic case, the conic is one visibility patch and the line component contributes at most $k-1$ additional possible blockers. The local combinatorial principle is that if a patch has $b$ external blockers, then the non-visibility graph on the patch has maximum degree at most $b$, so the patch is coverable by at most $b+1$ visible cliques.

The second contribution is the few-ordinary-lines theorem. Recall that an ordinary line of $A$ is a line containing exactly two points of $A$.

\begin{theorem}[Few ordinary lines force a linear visible clique]\label{thm:intro-few-ordinary}
For every integer $k\ge2$ and every $K>0$ there are constants $c=c(k,K)>0$ and $n_0=n_0(k,K)$ such that the following holds. If $A\subset\R^2$ has $n\ge n_0$ points, no $k$ collinear points, and at most $Kn$ ordinary lines, then
\[
  \omega(V(A))\ge c n.
\]
More strongly, all but $O_K(1)$ points of $A$ can be partitioned into $O_{k,K}(1)$ mutually visible sets.
\end{theorem}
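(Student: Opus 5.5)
We prove Theorem~\ref{thm:intro-few-ordinary} by feeding the Green--Tao structure theorem into the cubic-container theorem (Theorem~\ref{thm:intro-cubic-informal}, in its sharper form Theorem~\ref{thm:cubic-container}). In the version we use, Green--Tao says: if $A$ has $n\ge n_0(K)$ points and at most $Kn$ ordinary lines, then after adding or deleting $O(K)$ points, $A$ becomes one of three configurations. These are (i) $n-O(K)$ points on a line; (ii) a coset-type configuration on an irreducible cubic (elliptic curve or acnodal cubic); (iii) a configuration on a conic together with a line, with roughly $n/2$ points on each. Concretely, there is a cubic $\Gamma$ (possibly reducible) with $|A\setminus\Gamma|\le s:=O(K)$.

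\textbf{Step 1: rule out case (i).} A line containing $n-O(K)$ points of $A$ contradicts the hypothesis of no $k$ collinear points once $n$ is large. This fixes the first contribution to $n_0(k,K)$.

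\textbf{Step 2: rule out the three-line case.} The same observation disposes of every reducible cubic containing a line component $L$ that carries $\gg 1$ points. Since $|A\cap L|\le k-1$, the three-line cubic contains at most $3(k-1)$ points of $A$. On the other hand, $|A\cap\Gamma|\ge n-s$. So for $n>3(k-1)+s$ the hypothesis $|A\cap\Gamma|>3(k-1)$ of Theorem~\ref{thm:intro-cubic-informal} holds, and the excluded three-line case cannot occur.

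\textbf{Step 3: apply the cubic-container theorem.} Theorem~\ref{thm:intro-cubic-informal} now gives a visible clique cover of $A\cap\Gamma$, with cliques visible in $A$, of size $O_k(s+1)=O_{k}(K+1)$. The displayed bound then yields
\[
  \omega(V(A))\ge \min\left(\frac{n-s-(k-1)}{s+k},\ \frac{n-s-13}{15(s+1)}\right),
\]
which is $\ge cn$ with $c\asymp 1/(k+K)$ once $n\ge n_0(k,K)$, since $s=O(K)$.

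\textbf{Step 4: the partition statement.} Start from the clique cover of $A\cap\Gamma$. Assign each point to one clique it belongs to; subsets of cliques are still cliques, so this turns the cover into a partition into $O_{k,K}(1)$ mutually visible sets. The exceptional $s=O_K(1)$ off-cubic points are simply discarded. This covers all but $O_K(1)$ points, as claimed.

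\textbf{Main obstacle.} The delicate step is the interface with Green--Tao, because their theorem permits \emph{adding} $O(K)$ points as well as deleting them. We need the structure phrased as ``all but $O(K)$ points of $A$ lie on $\Gamma$'', and only the deletion part matters for this. Added points are not in $A$, so they never act as blockers in $V(A)$; blockers are exactly the off-cubic points of $A$, which the container theorem already handles. So I would carefully extract the statement ``$|A\setminus\Gamma|=O(K)$'' from the Green--Tao classification, checking each case, including the line-plus-conic case, where the line component carries at most $k-1$ points of $A$. I would also verify that $n_0$ depends only on $k$ and $K$.
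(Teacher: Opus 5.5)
Your proposal is correct and follows the paper's proof. The paper extracts from Green--Tao a real cubic $\Gamma$ with $|A\setminus\Gamma|\le c_{\rm GT}K$, takes $n>c_{\rm GT}K+3(k-1)$ so that $|A\cap\Gamma|>3(k-1)$ (which excludes the three-line case), and then reads off both the bounded visible clique cover and the linear clique bound from Theorem~\ref{thm:cubic-container}, exactly as you do; the only detail you leave implicit is that Green--Tao's models are given up to a projective transformation, which the paper handles by pulling the model cubic back, harmlessly, since the result is still a real cubic and a component at infinity carries no points of $A$.
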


This follows by applying the Green--Tao theorem that a sufficiently large set with at most $Kn$ ordinary lines has all but $O(K)$ points on a cubic, and then using the cubic-container theorem.

The third contribution is a bounded-degree algebraic-curve case of the conjecture.

\begin{theorem}[Irreducible algebraic curves, fixed-degree form]\label{thm:intro-curve}
Fix $d,k,\ell$. There is $n_0(d,k,\ell)$ such that if $C\subset\R^2$ is an irreducible real algebraic curve of degree at most $d$ and $A\subset C(\R)$ has at least $n_0$ points, then $A$ contains either $k$ collinear points or $\ell$ mutually visible points.
\end{theorem}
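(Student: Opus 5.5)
We split according to whether the curve $C$ supports many collinear triples of $A$. There are two regimes. If $C$ is a cubic (or of degree at most $3$ after accounting for line components), we apply the cubic-container theorem (Theorem~\ref{thm:intro-cubic-informal}) with $s=0$: $A\subset\Gamma$, and since there are no $k$ collinear points, an irreducible $C$ cannot be a line once $|A|\ge k$. Hence $|A\cap\Gamma|>3(k-1)$ as soon as $n_0>3(k-1)$. We then get $\omega(V(A))\ge \min\{(n-(k-1))/k,(n-13)/15\}$, which exceeds $\ell$ for $n$ large.

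So assume $C$ is irreducible of degree $4\le \deg C\le d$ and not a cubic. The key input is the Elekes--Szab\'o theorem on triple lines: for an irreducible curve of degree $d$ that is not a cubic, the number of collinear triples of $A\subset C$ is $O_d(n^{2-\eta})$ for some $\eta=\eta(d)>0$. The same holds if $C$ is irreducible of degree at most $3$ but we wish to treat it uniformly; only curves contained in a cubic can carry $\Omega(n^2)$ collinear triples.

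In this subquadratic regime we argue through non-visibility. If $p,q\in A$ are not visible, there is $r\in A\cap(pq)^\circ$, so $\{p,q,r\}$ is a collinear triple. Each collinear triple accounts for at most $3$ non-visible pairs. More precisely, a line meeting $A$ in $m<k$ points contributes at most $\binom m2\le \binom{k-1}{2}$ non-visible pairs and at least one triple when $m\ge3$. Hence the complement of $V(A)$ has $O_{d,k}(n^{2-\eta})$ edges. By Tur\'an's theorem, a graph on $n$ vertices with $e$ edges has independence number at least $n^2/(2e+n)$. Applied to the complement graph, this gives $\omega(V(A))\ge \Omega_{d,k}(n^{\eta})$, which exceeds $\ell$ for $n\ge n_0(d,k,\ell)$. (Irreducibility also bounds $|C\cap L|\le d$ for any line $L$ not contained in $C$, so collinear sets are automatically of size at most $d$. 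This gives an independent bound on line richness, though we do not need it.)

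The main obstacle is the precise form of the Elekes--Szab\'o input. We need a version for a single irreducible curve $C$ of bounded degree: either $C$ is a cubic, or the collinear triples on $A\subset C$ number $O_d(n^{2-\eta})$, uniformly in the curve for fixed $d$. The cleanest route is to take the Elekes--Szab\'o theorem for the collinearity relation on $C\times C\times C$, whose exceptional case produces a cubic containing $C$. Since $C$ is irreducible, that forces $C$ itself to be a cubic or a line, and the line case is excluded by the no-$k$-collinear hypothesis. Care is needed with uniformity in the coefficients of $C$; the standard statements give constants depending only on degree, which suffices. If the exceptional conclusion only yields a cubic containing a large subset of $A$, we would instead apply Theorem~\ref{thm:cubic-container} to that subset viewed as its own point set, since visibility in a subset is only easier.
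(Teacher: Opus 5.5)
Your argument is correct and is essentially the paper's: the cubic-container theorem with $s=0$ handles $\deg C\le 3$, and for $\deg C\ge 4$ you combine Tur\'an with Elekes--Szab\'o. The only difference is direction: you run it forward (few collinear triples, hence few non-visible pairs, hence $\omega(V(A))=\Omega_{d,k}(n^{\eta})$), while the paper argues by contradiction; your direction buys a polynomial clique bound but needs the version of Elekes--Szab\'o in which $\eta$ depends only on $d$, whereas the paper's form allows $\eta$ to depend on the density constant.

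One caution about your closing fallback: the monotonicity is backwards, since a clique of $V(A')$ for $A'\subset A$ need not be mutually visible in $A$ (this is exactly the ambient-blocker issue). The fallback is unnecessary anyway, because by B\'ezout a cubic containing more than $3d$ points of $A\subset C$ must contain the irreducible curve $C$, forcing $\deg C\le 3$.
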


The proof combines Tur\'an's theorem with the Elekes--Szab\'o theorem: if a non-cubic irreducible algebraic curve supports quadratically many distinct triple lines, then $n$ is bounded in terms of the degree and density. If the curve is a cubic, the cubic-container theorem applies directly.

The fourth contribution is a dense-orchard core lemma. We prove that if $A$ has no $k$ collinear points and no visible $K_\ell$, then a positive-density subset $A'\subset A$ remains after pruning such that every point of $A'$ is incident to linearly many $3$-rich lines of $A'$. This gives a natural combinatorial object that a future proof of the full conjecture must convert into an ambient cubic container.

The paper is organized as follows. Section~\ref{sec:prelim} records the Green--Tao input. Section~\ref{sec:blocker} proves the local blocker-colouring lemma. Sections~\ref{sec:cubic-lemma} and~\ref{sec:conic} prove the visibility geometry for irreducible cubics and conics. Section~\ref{sec:container-proof} proves the cubic-container theorem. Section~\ref{sec:few-ordinary-proof} proves the few-ordinary-lines theorem. Section~\ref{sec:k4l4} gives a direct $(4,4)$ calibration. Section~\ref{sec:bounded-degree} proves the bounded-degree irreducible-curve and robust low-degree-container cases using Elekes--Szab\'o. Section~\ref{sec:sharpness} gives a sharp one-blocker construction. Section~\ref{sec:orchard} proves the dense-orchard core lemma, applies it inside bounded-degree curves, and formulates the ambient cubic-container problem.

\section{Preliminaries and the Green--Tao input}\label{sec:prelim}

We work in the affine plane $\R^2$ and its projective closure $\RP^2$.  Since all point sets are affine, a projective line at infinity contains no point of $A$ unless explicitly stated otherwise.  For $p,q\in\R^2$, $(pq)^\circ$ denotes the open Euclidean segment joining them.

A real projective cubic is the zero locus in $\RP^2$ of a nonzero homogeneous polynomial of degree $3$.  Over $\R$, every cubic is, up to multiplicity, one of the following: an irreducible cubic; a real line together with an irreducible real conic; or a union of three real lines, not necessarily distinct.  We use B\'ezout's theorem in the standard form that two projective plane curves of degrees $d$ and $e$ with no common component have at most $de$ intersection points over $\C$, counted with multiplicity.

We need only the following consequence of the Green--Tao structure theorem for point sets with few ordinary lines.

\begin{theorem}[Green--Tao structure theorem, consequence of \cite{GreenTao2013}]\label{thm:GT}
There are an absolute constant $C>0$, an absolute constant $c_{\rm GT}>0$, and a function $n_{\rm GT}(K)$ with
\[
  n_{\rm GT}(K)\le \exp\exp(CK^C)
\]
such that the following holds.  Let $P\subset\RP^2$ be a finite set of $n\ge n_{\rm GT}(K)$ points spanning at most $Kn$ ordinary lines.  Then there is a real projective cubic $\Gamma\subset\RP^2$ such that
\[
  |P\setminus\Gamma|\le c_{\rm GT}K.
\]
\end{theorem}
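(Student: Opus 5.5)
The statement is a repackaging of the main theorem of Green and Tao~\cite{GreenTao2013}, so the plan is to quote their structure theorem and show that each of its alternatives gives a cubic containing all but $O(K)$ points. Their result (Theorem~1.5 there) says: if $P$ has $n\ge n_0(K)$ points and at most $Kn$ ordinary lines, then after adding or removing $O(K)$ points, $P$ is one of the following.
\begin{enumerate}[label=(\roman*)]
\item $P$ is contained in $n-O(K)$ points of a line, plus $O(K)$ extra points.
\item $P$ lies in the union of a coset $H\oplus x$ of a finite subgroup of the smooth points of an irreducible conic $\sigma$ together with $\pm$-corresponding points on a line $\ell$, of sizes $n/2\pm O(K)$.
\item $P$ is a coset $H\oplus x$ of a finite subgroup of the real points of an irreducible cubic, with $3x\in H$.
\end{enumerate}
In all three cases the resulting configuration lies on a projective cubic. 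In case (i) this is the line, counted as a degenerate cubic (the line together with any two further lines). In case (ii) it is the conic union the line. In case (iii) it is the irreducible cubic itself. The symmetric difference between $P$ and this configuration is $O(K)$.

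The proof then takes two steps.
\begin{itemize}
\item[\emph{Step 1.}] Take $\Gamma$ to be that cubic.
\item[\emph{Step 2.}] Bound $|P\setminus\Gamma|$. Adding points to $P$ cannot create points of $P$ off $\Gamma$. Only the $O(K)$ modified points can lie off $\Gamma$, so $|P\setminus\Gamma|\le c_{\rm GT}K$ for an absolute constant $c_{\rm GT}$.
\end{itemize}
Here $K$ is taken at least $1$, or else one uses $\max(K,1)$; small $K$ is absorbed into $n_{\rm GT}$ and the constants. For the smallest values of $K$, if the precise form of Green--Tao's statement makes the error term $O(1)$ rather than $O(K)$, one enlarges $c_{\rm GT}$ accordingly.

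The bound $n_{\rm GT}(K)\le\exp\exp(CK^C)$ is exactly the quantitative threshold Green and Tao state, which comes from their use of the Sylvester--Gallai-type Melchior argument followed by an additive-combinatorial step. So this part is a citation.

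The main obstacle is bookkeeping rather than new mathematics. One must check that the degenerate cases are genuinely cubics in the paper's sense, which includes repeated lines and line-plus-conic. One must also check that passing between the affine sets used elsewhere in the paper and the projective statement costs nothing. For that, note that an affine point set viewed in $\RP^2$ has the same ordinary lines, except possibly the line at infinity, which contains no points and so is not ordinary. Hence the hypothesis transfers directly, and the cubic obtained is a projective cubic whose affine part contains all but $c_{\rm GT}K$ points of $P$.
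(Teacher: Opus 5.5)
Your proposal follows the paper's justification: quote the Green--Tao structure theorem, observe that each model configuration (points on a line, a conic together with a line, a coset on an irreducible cubic) lies on a real projective cubic, and conclude that only the $O(K)$ discrepant points can lie off it. The paper also notes that Green--Tao's models are given up to a projective transformation, so the cubic is pulled back, which is harmless; one caveat, which the paper also glosses over, is that passing to $\max(K,1)$ does not literally give $|P\setminus\Gamma|\le c_{\rm GT}K$ or the threshold $\exp\exp(CK^C)$ as $K\to 0$, but for $K<3/7$ the Kelly--Moser bound (a non-collinear set spans at least $3n/7$ ordinary lines) puts all of $P$ on a line, and the range $3/7\le K<1$ is then absorbed into the absolute constants.
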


\begin{proof}[Justification of the stated consequence]
Green and Tao's full structure theorem says that, after a projective transformation if necessary, $P$ differs in $O(K)$ points from one of three model configurations: points on a line, a B\"or\H{o}czky-type configuration on a conic together with a line, or a coset of a finite subgroup on an irreducible cubic.  Each model is contained in a real cubic.  Pulling that cubic back by the inverse projective transformation gives a real cubic in the original projective plane containing all but $O(K)$ points of $P$.  Absorbing the absolute implicit constant into $c_{\rm GT}$ gives the theorem.
\end{proof}

\section{A local blocker-colouring lemma}\label{sec:blocker}

For a subset $Y\subseteq A$, let $\vartheta_A(Y)$ denote the least number of parts in a partition of $Y$ into subsets that are mutually visible in the ambient set $A$. Equivalently, $\vartheta_A(Y)$ is the chromatic number of the non-visibility graph induced by $Y$, but the notation emphasizes that blockers may lie anywhere in $A$.

The next lemma is the main combinatorial mechanism. It is stronger than an averaging estimate for one large clique: it gives a clique cover of the whole patch.

\begin{lemma}[Local blocker-colouring lemma]\label{lem:blocker-colouring}
Let $A\subset\R^2$ be finite and let $V\subset\R^2$ be a set such that no line contains three distinct points of $V$.  Put $X=A\cap V$ and $m=|X|$.  Let
\[
B:=\{r\in A\setminus V:\text{ there exist }p,q\in X\text{ with }r\in(pq)^\circ\},
\qquad b:=|B|.
\]
Then
\[
  \vartheta_A(X)\le b+1.
\]
Consequently,
\[
  \omega(V(A))\ge \frac{m}{b+1}.
\]
\end{lemma}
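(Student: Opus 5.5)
The plan is to bound the maximum degree of the non-visibility graph $G$ on $X$ and then colour $G$ greedily. Here $G$ is the graph on $X$ in which $p\sim q$ exactly when $p,q$ are not visible in $A$, that is, $(pq)^\circ\cap A\neq\emptyset$. A partition of $X$ into sets that are mutually visible in $A$ is precisely a proper colouring of $G$, so $\vartheta_A(X)=\chi(G)$. Since $\chi(G)\le\Delta(G)+1$ by greedy colouring, it suffices to prove $\Delta(G)\le b$.

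Fix $p\in X$ and let $q\in X$ be a neighbour of $p$ in $G$. Pick some $r\in A$ lying in $(pq)^\circ$. First, $r\notin V$: otherwise $p,q,r$ would be three distinct points of $V$ on one line, which is excluded. Hence $r\in A\setminus V$ with $r\in(pq)^\circ$ for some $p,q\in X$, so $r\in B$. Choosing one such $r$ for each neighbour $q$ gives a map $\phi_p$ from the $G$-neighbourhood of $p$ into $B$.

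The key step is that $\phi_p$ is injective. Suppose $\phi_p(q)=\phi_p(q')=r$ with $q\neq q'$. Then $q$ and $q'$ both lie on the line through $p$ and $r$, on the ray from $p$ through $r$ and beyond $r$. In particular $p,q,q'$ are three distinct collinear points of $V$, a contradiction. Therefore $\deg_G(p)\le b$ for every $p\in X$, which gives $\Delta(G)\le b$ and hence $\vartheta_A(X)\le b+1$.

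For the consequence, one of the at most $b+1$ parts has size at least $m/(b+1)$. That part is mutually visible in $A$, so it is a clique of $V(A)$, and $\omega(V(A))\ge m/(b+1)$. The argument contains no real obstacle. The only point needing care is that the no-three-collinear hypothesis is used twice: once to place every blocker outside $V$, and once for the injectivity of $\phi_p$.
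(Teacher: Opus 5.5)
Your proof is correct and is essentially the paper's argument. The paper assigns each non-visible pair to one chosen blocker in $B$, notes that the pairs assigned to a fixed blocker form a matching (by the no-three-collinear hypothesis on $V$), and deduces $\Delta\le b$ and a greedy $(b+1)$-colouring; your per-vertex injectivity of $\phi_p$ is the same degree bound, and your explicit check that every blocker of a pair from $X$ lies outside $V$, hence in $B$, fills in a step the paper leaves implicit.
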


\begin{proof}
Let $H$ be the graph on vertex set $X$ whose edges are the non-visible pairs in $A$.  Thus $pq\in E(H)$ if and only if $(pq)^\circ\cap A\ne\varnothing$.  Every edge of $H$ has at least one blocker in $B$; choose one such blocker and assign the edge to it.

Fix $r\in B$.  The edges assigned to $r$ form a matching.  Indeed, if two assigned edges shared a vertex, say $pq$ and $pq'$, then $r\in(pq)^\circ\cap(pq')^\circ$.  Hence $p,q,q'$ lie on the same line, contradicting the assumption that no line contains three points of $V$.

Therefore each $r\in B$ contributes at most one edge incident to any fixed vertex of $H$.  It follows that $\Delta(H)\le b$.  By the greedy colouring theorem, $H$ has a proper vertex-colouring with at most $b+1$ colours.  Each colour class is an independent set in $H$, which means that every pair of its points is visible in the ambient set $A$.  The largest colour class has size at least $m/(b+1)$.
\end{proof}

\section{The uniform cubic visibility lemma}\label{sec:cubic-lemma}

\begin{definition}\label{def:patch}
Let $C\subset\R^2$ be a real algebraic curve and $V\subset C$ a subset.
We say $V$ is a \emph{visibility patch of $C$} if for every pair of
distinct points $p,q\in V$ the open segment $(pq)^\circ$ contains no
point of $C$.
\end{definition}

The key quantitative fact is the following. We state it in its
projective form, which is what the proof requires.

\begin{lemma}[Uniform cubic visibility lemma]\label{lem:cubic-visibility}
Let $\bar C\subset\RP^2$ be an irreducible real projective cubic, and
fix an affine chart $\R^2\subset\RP^2$ with line at infinity $L_\infty$
such that $L_\infty \not\subset\bar C$. Then there exist a finite set
$E\subset\bar C(\R)$ with $|E|\le 13$ and open arcs
$V_1,\ldots,V_M\subset \bar C(\R)$ with $M\le 15$ such that
\[
\bar C(\R) = E \cup V_1\cup\cdots\cup V_M,
\]
the union is disjoint, each $V_i$ lies in the affine chart $\R^2$, and
each $V_i$ is a \emph{visibility patch of $\bar C$}: for every two
distinct $p,q\in V_i$,
\[
(pq)^\circ\cap \bar C(\R)=\varnothing.
\]
Moreover, no line meets any single $V_i$ in three points.
\end{lemma}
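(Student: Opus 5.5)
Cut $\bar C(\R)$ at finitely many special points so that each remaining open arc is an affine, strictly convex, locally convex arc. A convex arc has two properties that give the patch condition directly: the open chord between two of its points misses the arc, and no line meets it three times.

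\textbf{Special points.} Take $E$ to consist of:
\begin{itemize}
\item the real points of $\bar C\cap L_\infty$, at most $3$ by B\'ezout since $L_\infty\not\subset\bar C$;
\item the real singular point, at most $1$, since an irreducible cubic has at most one singular point;
\item the real inflection points. These are the intersections of $\bar C$ with its Hessian, a cubic, so there are at most $9$; in fact at most $3$ are real on a smooth real cubic.
\end{itemize}
This already gives $|E|\le 3+1+9=13$, which matches the stated bound. $\bar C(\R)$ has at most two connected components (a pseudo-line and possibly an oval), each a topological circle or, in the singular case, a circle with one pinched point. Removing $|E|$ points from a union of at most two circles leaves at most $|E|+1$ open arcs, and this count is where $M\le 15$ should come from, with a little care: if there is an oval, it contains no inflections and meets $L_\infty$ in an even number of points. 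Each arc $V_i$ is smooth, avoids $L_\infty$ (so it lies in $\R^2$), and has nowhere-vanishing curvature.

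\textbf{Key steps on one arc $V=V_i$.}
\begin{enumerate}
\item An affine arc with no inflections and no singularities has tangent direction turning monotonically.
\item Show the total turning is less than $\pi$, i.e.\ the arc is a convex arc lying on the boundary of its convex hull. To guarantee this, refine the cut. Add to the cut the points where the tangent is parallel to one of two fixed generic directions; the count for this is computed in the obstacle paragraph below. Alternatively, reuse the asymptotic points already in $E$. On each resulting piece the tangent turns by less than $\pi$, so the piece is the graph of a strictly convex or concave function over an interval in suitable coordinates.
\item For such a graph, no line meets it in three points, by strict convexity.
\item Show the chord condition. For $p,q\in V$, the open segment $(pq)^\circ$ lies strictly on one side of the graph over the interval between $p$ and $q$, so it misses $V$.
\item Show $(pq)^\circ$ also misses the rest of $\bar C(\R)$. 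The line $pq$ meets $\bar C$ in at most three points. Suppose a third point $r$ lay on $(pq)^\circ$. Then the cubic restricted to the line has the configuration $p,r,q$ in order along an arc. Use a continuity/degree argument: move $q$ toward $p$ along $V$. The third intersection $r(q)$ varies continuously (it is defined by the residual root) and stays off the closed segment unless it passes through $p$ or $q$. Passing through an endpoint would make the line tangent there, giving a fourth intersection count via multiplicity. As $q\to p$, the line tends to the tangent at $p$, whose third intersection is not near $p$ since $p$ is not an inflection. So $r(q)$ starts outside $[pq]$ and can never enter.
\end{enumerate}

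\textbf{Main obstacle.} Step 5, together with the bookkeeping needed to keep $M\le15$ and $|E|\le13$ simultaneously, is the hardest part. The third intersection could escape through $L_\infty$, and then the line would switch "sides" in the affine picture. I would handle this by making sure each arc is cut at the points of $L_\infty\cap\bar C$, so the segment between $p$ and $q$ is the affine segment and never crosses infinity. I would also refine the arcs so the residual root stays in one component of $\RP^1$ minus the segment. If the direct count from the cut points overshoots, I would first try exploiting that real smooth cubics have exactly $3$ real flexes and the oval has none. This gives at most $3+3+1$ special points on the pseudo-line, with tangent-direction cuts added as needed, to land inside $13$ points and $15$ arcs.
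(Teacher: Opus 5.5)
Your strategy is the paper's. You cut $\bar C(\R)$ at the singular point, the points at infinity, the real flexes and the tangency points for a fixed direction. You show each remaining arc is the graph of a strictly convex or concave function. You then run a connectedness argument on the residual third intersection of the chord. Your one-parameter version of that last step fixes $p$ and lets $q\to p$ along the arc, so that $r(q)$ tends to the tangential point of $p$, which differs from $p$ because $p$ is not a flex. This is a legitimate substitute for the paper's clopen argument on $\Delta=\{s<t\}$ and its local-convexity check near the diagonal. It requires parametrizing the chord by $p$ and its unit direction, because the parametrization $\bar F(\alpha\tilde p+\beta\tilde q)$ tends to the zero polynomial as $q\to p$.

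The genuine gap is that the constants $|E|\le 13$ and $M\le 15$, which are the content of the lemma, are never secured.

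\begin{itemize}
\item \emph{Your first $E$ fails.} The set you first propose (points at infinity, the singular point, and up to $9$ flexes) has no tangency cuts. The pieces then need not be open arcs, let alone graphs. For $y^2=x^3-x$ the bounded oval contains none of these points and survives as a closed curve. For the crunodal cubic $y^2=x^2(x+1)$, the loop minus the node is an open arc whose tangent turns through $3\pi/2$.
\item \emph{Your remedy overshoots.} Tangencies with two directions add up to $12$ points and give $|E|\le 3+1+3+12=19$. Your final paragraph only says cuts will be added ``as needed''.
\item \emph{The missing observation is that one direction suffices.} On a flex-free arc the tangent angle is strictly monotone. If the tangent is never vertical, the angle stays in an open interval of length $\pi$. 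Equivalently, as in the paper, $x$-projection is a local diffeomorphism, and the arc cannot be a circle since $x$ would attain a maximum, so it is a graph over an interval.
\item \emph{The count then closes.} The vertical tangencies lie on $\bar C\cap\{\bar F_Y=0\}$, so there are at most $6$ by B\'ezout. Add at most $3$ real flexes, at most $3$ points at infinity and at most one singular point to get $|E|\le 13$. You must verify the bound of $3$ real flexes for cuspidal and nodal cubics too, not only smooth ones. Finally $M\le 3+12=15$, because deleting the singular point leaves at most $3$ components and each further deletion adds at most one.
\end{itemize}

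Two smaller corrections in Step 5:
\begin{itemize}
\item If $r$ reaches an endpoint, the intersection divisor is $2p+q$, which is consistent with B\'ezout. So the contradiction is not a ``fourth intersection''. It is that a chord of a strictly convex graph is never tangent at its endpoints, since the secant slope lies strictly between $f'(s)$ and $f'(t)$.
\item Escape through $L_\infty$ is harmless. The complement of the closed segment $[p,q]$ in the projective line $pq$ is connected, so $r$ can enter $(pq)^\circ$ only through $p$ or $q$, and no further refinement of the arcs is needed.
\end{itemize}
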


For orientation, Figure~\ref{fig:cubic-patches} illustrates the
decomposition in the standard affine chart for the cubic
$y^2=x^3-x$.

\begin{center}
\centering
\begin{tikzpicture}[x=1.35cm,y=0.58cm,>=Latex]
  \draw[->,gray!65] (-1.35,0) -- (2.68,0) node[below right] {$x$};
  \draw[->,gray!65] (0,-3.25) -- (0,3.35) node[above left] {$y$};

  \draw[line width=1.1pt,blue!65!black,domain=-1:0,samples=90,smooth]
    plot ({\x},{sqrt(\x*\x*\x-\x)});
  \draw[line width=1.1pt,teal!70!black,domain=-1:0,samples=90,smooth]
    plot ({\x},{-sqrt(\x*\x*\x-\x)});
  \draw[line width=1.1pt,orange!85!black,domain=1:1.4679,samples=70,smooth]
    plot ({\x},{sqrt(\x*\x*\x-\x)});
  \draw[line width=1.1pt,red!70!black,domain=1.4679:2.35,samples=90,smooth]
    plot ({\x},{sqrt(\x*\x*\x-\x)});
  \draw[line width=1.1pt,violet!75!black,domain=1:1.4679,samples=70,smooth]
    plot ({\x},{-sqrt(\x*\x*\x-\x)});
  \draw[line width=1.1pt,green!55!black,domain=1.4679:2.35,samples=90,smooth]
    plot ({\x},{-sqrt(\x*\x*\x-\x)});

  \node[blue!65!black,font=\scriptsize] at (-0.52,0.95) {$V_1$};
  \node[teal!70!black,font=\scriptsize] at (-0.52,-0.95) {$V_2$};
  \node[orange!85!black,font=\scriptsize] at (1.22,0.72) {$V_3$};
  \node[red!70!black,font=\scriptsize] at (1.93,2.2) {$V_4$};
  \node[violet!75!black,font=\scriptsize] at (1.22,-0.72) {$V_5$};
  \node[green!55!black,font=\scriptsize] at (1.93,-2.2) {$V_6$};

  \foreach \x/\y in {-1/0,0/0,1/0}
    \node[draw=black,fill=white,minimum size=5.1pt,inner sep=0pt,line width=0.55pt] at (\x,\y) {};
  \fill[black] (1.4679,1.3019) circle (1.75pt);
  \fill[black] (1.4679,-1.3019) circle (1.75pt);

  \node[font=\scriptsize,anchor=east] at (-1.07,-0.48) {$E_{\mathrm{vt}}$};
  \node[font=\scriptsize,anchor=west] at (1.52,1.45) {$E_{\mathrm{fl}}$};
  \node[font=\scriptsize,anchor=west] at (1.52,-1.45) {$E_{\mathrm{fl}}$};
  \draw[->,gray!70,thick] (2.35,3.27) -- (2.55,3.55);
  \draw[->,gray!70,thick] (2.35,-3.27) -- (2.55,-3.55);
  \node[font=\scriptsize,anchor=west] at (2.45,3.15) {$E_\infty$};
\end{tikzpicture}
\refstepcounter{figure}\label{fig:cubic-patches}
\smallskip

\begin{minipage}{0.88\textwidth}
\small
\textsc{Figure~\thefigure.} The cubic $y^2=x^3-x$ in its standard affine chart. The squares mark the three finite vertical tangencies $(-1,0)$, $(0,0)$ and $(1,0)$. The dots mark the two finite flexes, and the point at infinity $E_\infty=[0:1:0]$, indicated by the arrows, is the third real flex. Removing these exceptional points gives the labelled visibility patches. The labels $V_4,V_6$ denote the two unbounded affine arcs, which meet at the point at infinity in the projective closure. This standard chart is used only for visualization; the proof below first allows a generic affine change of coordinates.
\end{minipage}
\end{center}

We will prove the lemma in several steps.

\begin{proof}
Let $\bar F\in\R[X,Y,Z]$ be the homogenization of a defining polynomial
of $\bar C$. Fix coordinates so that $L_\infty=\{Z=0\}$, and choose a
further generic \emph{invertible affine change of coordinates} on $\R^2$
(i.e., a projective transformation of $\RP^2$ preserving $L_\infty$ as
a set), if necessary, so that the point $p_\infty:=[0:1:0]$
satisfies $p_\infty\notin\bar C$. Such affine changes preserve
collinearity, betweenness, and hence the visibility graph of any
finite set; they also preserve the set $\bar C(\R)$, the class of
real irreducible cubics, the points at infinity, vertical tangencies
(after relabeling the vertical direction), and the projective
Hessian. Thus the conclusion of the lemma (which concerns the
existence of a decomposition of $\bar C(\R)$ into points and
visibility patches with certain properties) is invariant under such
changes. This is a harmless genericity condition: $\bar C\cap L_\infty$
consists of at most $3$ points (B\'ezout), and we may choose the
affine rotation at infinity to avoid these three.

\smallskip
\textbf{Step 1: Define the exceptional set $E$ projectively.}
We let
\[
E \;:=\; E_{\mathrm{sing}}\;\cup\; E_\infty\;\cup\; E_{\mathrm{vt}}\;\cup\;
E_{\mathrm{fl}} \;\subset\; \bar C(\R),
\]
where
\begin{enumerate}
\item[(a)] $E_{\mathrm{sing}}$ is the set of singular points of
$\bar C$,
\item[(b)] $E_\infty := \bar C(\R)\cap L_\infty$,
\item[(c)] $E_{\mathrm{vt}}$ is the set of smooth points
$p\in\bar C(\R)\setminus E_\infty$ whose tangent line $T_p\bar C$
passes through the vertical infinity point $p_\infty=[0:1:0]$, and
\item[(d)] $E_{\mathrm{fl}}$ is the set of smooth points
$p\in\bar C(\R)\setminus E_\infty$ that are \emph{projective flexes}
of $\bar C$, i.e.\ where the projective Hessian $H_{\bar F}$ of the
homogeneous polynomial $\bar F$ vanishes.
\end{enumerate}
We bound each of the four sets.

\textit{(a) Singular points, $|E_{\mathrm{sing}}|\le 1$.} Suppose, for
contradiction, that $p\ne q$ are both singular points of $\bar C$, and
let $\ell$ be the projective line through $p$ and $q$. At any singular
point of $\bar C$, every line through that point meets $\bar C$ with
intersection multiplicity $\ge 2$, because the local equation of
$\bar C$ has vanishing linear part. Hence
$\mu_p(\ell\cap\bar C)\ge 2$ and $\mu_q(\ell\cap\bar C)\ge 2$, so
$\sum_{r\in\ell\cap\bar C}\mu_r\ge 4$. If $\ell\not\subset\bar C$,
B\'ezout gives $\sum_r\mu_r = \deg\ell\cdot\deg\bar C = 3$, a
contradiction. If $\ell\subset\bar C$, then $\bar C$ has $\ell$ as a
degree-$1$ component, contradicting the irreducibility of $\bar C$
(which has degree $3$). Hence $|E_{\mathrm{sing}}|\le 1$.

\textit{(b) Points at infinity, $|E_\infty|\le 3$.} Since
$L_\infty\not\subset\bar C$ and $\deg L_\infty=1$, B\'ezout gives
$|\bar C\cap L_\infty|\le 1\cdot 3=3$.

\textit{(c) Vertical tangencies, $|E_{\mathrm{vt}}|\le 6$.} For a
smooth point $p\in\bar C$ with coordinates $[X_0:Y_0:Z_0]$, the
projective tangent line $T_p\bar C$ is
\[
\bar F_X(p)\,X + \bar F_Y(p)\,Y + \bar F_Z(p)\,Z = 0,
\]
where subscripts denote partial derivatives. This line passes through
$p_\infty=[0:1:0]$ iff $\bar F_Y(p)=0$. Therefore
\[
E_{\mathrm{vt}}\;\subset\; \bar C(\R)\cap \{\bar F_Y=0\}.
\]
The polynomial $\bar F_Y$ has degree $\deg\bar F-1=2$. It shares no
common component with $\bar F$: such a common factor of positive degree
would be a divisor of $\bar F$ of degree at most $2$, contradicting
irreducibility of the cubic $\bar F$. B\'ezout then gives
$|\bar C\cap\{\bar F_Y=0\}|\le 2\cdot 3=6$, so $|E_{\mathrm{vt}}|\le 6$.

\textit{(d) Flexes, $|E_{\mathrm{fl}}|\le 3$.}  We use a standard real-cubic fact: an irreducible real projective cubic has at most three smooth real flexes.  In the smooth case this is classical, and Bonifant--Milnor prove the sharper statement that every smooth real cubic has exactly three real flexes~\cite[Corollary~6.5]{BonifantMilnor}.  In the singular irreducible case, the same upper bound follows from the two real normal forms, as follows.  For the cuspidal cubic
$\bar F = X^3 - Y^2Z$, a direct computation gives
$H_{\bar F} = -24XY^2$.  The intersection of $\{H_{\bar F}=0\}$ with
$\bar C(\R)$ consists of the cusp $[0:0:1]$ (singular) and the point
at infinity $[0:1:0]$ (smooth), giving at most one smooth real flex.
For the nodal cubic $\bar F = Y^2Z-X^2(X+Z)$ (acnodal form), a direct
computation gives $H_{\bar F} = 8(3XY^2+Y^2Z-X^2Z)$.  Restricting to
$\bar C$ and dehomogenizing $Z=1$ gives
$H_{\bar F}|_{\bar C} = 8X^3(3X+4)$, vanishing at $X=0$ (the node,
singular) and $X=-4/3$, where $Y^2 = -16/27 < 0$, so this branch
contributes no real points; the only smooth real flex is at infinity.
For the nodal cubic $\bar F = Y^2Z-X^2(X-Z)$ (crunodal form), the
analogous computation gives $H_{\bar F}|_{\bar C} = 8X^3(3X-4)$, with
smooth real flexes at $X=4/3$ (yielding two real points
$Y = \pm 4/(3\sqrt{3})$) and at infinity, totaling three.  In all
cases $|E_{\mathrm{fl}}|\le 3$.

Summing (overlaps only decrease the size of the union):
\[
|E| \;\le\; 1 + 3 + 6 + 3 \;=\; 13.
\]

\smallskip
\textbf{Step 2: Count the arcs.} The real locus $\bar C(\R)$ is a
closed subset of the compact projective plane $\RP^2$. Its complement
\[
E^c \;:=\; \bar C(\R)\setminus E
\]
is an open subset of $\bar C(\R)$ consisting of smooth affine points
of $\bar C$ (singular points are in $E_{\mathrm{sing}}$; points at
infinity are in $E_\infty$). In particular $E^c$ is a smooth real
$1$-manifold.

By Harnack's theorem for smooth real plane curves (see
\cite[Theorem~11.6.2]{BCR}), the number of connected components of the
real locus of a smooth real projective plane curve of degree $d$ is at
most $\binom{d-1}{2}+1$; for $d=3$ this gives at most $2$. The same
bound of $2$ connected components holds for singular irreducible real
cubics as well. Indeed, let $s$ be the unique singular point of $\bar C$.
Projection from $s$ to any real line not through $s$ identifies the
nonsingular real locus with $\RP^1$ minus the set of real tangent
directions at $s$, and there are at most two such directions. If there
are no real tangent directions, then $\bar C(\R)\setminus\{s\}$ is
connected and $s$ is isolated, giving two components in total. If there
is one real tangent direction, then $\bar C(\R)\setminus\{s\}$ is
connected and reinserting $s$ keeps the curve connected. If there are
two real tangent directions, then $\bar C(\R)\setminus\{s\}$ has two
components and reinserting $s$ joins them. Thus $\bar C(\R)$ has at
most two connected components in every singular case.

Thus $\bar C(\R)$ has at most $2$ connected components. To bound the
number of components of $E^c$, we split according to the presence of a
singular point. If $E_{\mathrm{sing}}=\varnothing$, then
$\bar C(\R)$ is already a smooth real $1$-manifold. In this case
$|E|\le 12$, and removing the $|E|$ smooth exceptional points increases
$\pi_0$ by at most $1$ per removal, so
\[
M=\pi_0(E^c)\le 2+|E|\le 14.
\]

If $|E_{\mathrm{sing}}|=1$, remove the singular point first. The local
possibilities are a figure-eight-type crunode, a cusp, or an acnode;
in these cases removal increases $\pi_0$ by $1$, increases $\pi_0$ by
$1$, or decreases $\pi_0$ by $1$, respectively. Hence
\[
\pi_0(\bar C(\R)\setminus E_{\mathrm{sing}})\le 3.
\]
The remaining exceptional points are smooth points of the smooth real
$1$-manifold $\bar C(\R)\setminus E_{\mathrm{sing}}$. Since there are
at most $|E|-1\le 12$ of them, removing them increases $\pi_0$ by at
most one each, and therefore
\[
M=\pi_0(E^c)\le 3+(|E|-1)\le 15.
\]
In either case $M\le 15$. Call these components $V_1,\ldots,V_M$.

Each $V_i$ is a connected open smooth $1$-manifold, so it is
homeomorphic either to an open interval or to a circle $S^1$. We
claim $V_i$ is not a circle. If it were, then the continuous
$x$-projection $\pi(x,y)=x$ would attain a maximum on the compact set
$V_i$, at some point $p\in V_i$; since $p$ is an interior point of a
smooth $1$-manifold, $d\pi_p=0$ at this maximum. But $d\pi$ is the
same as the $dx$ component of the tangent, which is nonzero whenever
the tangent is not vertical. Since $V_i\cap E_{\mathrm{vt}}=\varnothing$,
the tangent to $V_i$ is never vertical, so $d\pi\ne 0$ everywhere on
$V_i$. Contradiction. Therefore $V_i$ is homeomorphic to an open
interval.

In fact, since $d\pi\ne 0$ on $V_i$, the projection $\pi$ is a local
diffeomorphism; combined with $V_i$ being a topological interval, $\pi$
is a diffeomorphism of $V_i$ onto its image $I_i:=\pi(V_i)\subset\R$,
which is an open interval. So $V_i$ is the graph
$\{(x,f_i(x)):x\in I_i\}$ of a smooth function
$f_i=\mathrm{proj}_y\circ\pi^{-1}\colon I_i\to\R$. In particular $V_i$
is an open arc.

\smallskip
\textbf{Step 3: Each arc is a visibility patch.} Fix $i$ and write
$V=V_i$, $I=I_i$, $f=f_i$, so that $V=\{(x,f(x)):x\in I\}$. We show
that $V$ is a visibility patch of $\bar C$: for all distinct
$p,q\in V$, the open segment $(pq)^\circ$ contains no point of
$\bar C(\R)$.

\smallskip
\emph{Convexity of $f$.} Since $V$ contains no flexes, we claim
$f''(x)\ne 0$ on $I$. This is the standard affine interpretation of a
projective flex: for a smooth affine point $p=(x_0,f(x_0))$ of
$\bar C$ with nonvertical tangent, the vanishing of the projective
Hessian $H_{\bar F}(p)$ is equivalent to $p$ being a flex
\cite[Theorem~2.7]{BonifantMilnor}, which in graph coordinates is the
same as $f''(x_0)=0$. Thus $f''\ne 0$ on $I$. By continuity and
connectedness of $I$, the sign of $f''$ is constant on $I$; so $f$ is
strictly convex or strictly concave on $I$.

\smallskip
\emph{B\'ezout setup for chords.} Fix distinct $p,q\in V$, say with
$p=(s,f(s))$ and $q=(t,f(t))$, $s<t$. Let $\ell=\ell(s,t)$ be the
(projective) chord line through $p$ and $q$. Since $\bar C$ is
irreducible of degree $3$, $\ell\not\subset\bar C$, so by B\'ezout,
\[
\sum_{r\in\ell\cap\bar C}\mu_r(\ell\cap\bar C)=3,
\]
where $\mu_r$ denotes intersection multiplicity at $r\in\RP^2(\C)$.

At $p$, the tangent $T_p\bar C$ is the line $y-f(s)=f'(s)(x-s)$; its
slope is $f'(s)$. The chord $\ell$ has slope equal to the secant slope
\[
\sigma(s,t):=\frac{f(t)-f(s)}{t-s}.
\]
By the mean value theorem, $\sigma(s,t)=f'(c)$ for some
$c\in(s,t)$. Since $f''$ has constant nonzero sign on $I$ (shown
above), the derivative $f'$ is strictly monotonic on $I$; this
strict monotonicity, not the mean value theorem itself, yields
\[
\min\{f'(s),f'(t)\}<f'(c)<\max\{f'(s),f'(t)\},
\]
so in particular $\sigma(s,t)\ne f'(s)$ and $\sigma(s,t)\ne f'(t)$.
Hence $\ell$ is not tangent to $\bar C$ at $p$ or at $q$, and
\[
\mu_p(\ell\cap\bar C)=\mu_q(\ell\cap\bar C)=1.
\]

There is therefore a unique third point $r=r(s,t)\in
\ell\cap\bar C$ with $\mu_r(\ell\cap\bar C)=1$. A priori $r$ is
defined as a point of $\RP^2(\C)$.

\smallskip
\emph{$r$ is real.} Complex conjugation $\sigma_c$ acts on
$\ell(\C)\cap\bar C(\C)$ (both $\ell$ and $\bar C$ are defined over
$\R$). It fixes $p$ and $q$ (they are real) and permutes the three
(simple) intersection points as a multiset. So $\sigma_c$ maps
$\{p,q,r\}$ to itself. Since $p,q$ are fixed, $\sigma_c(r)\in
\{p,q,r\}$. If $\sigma_c(r)=p$ then $r=\sigma_c(p)=p$, contradicting
$r\ne p$; similarly $\sigma_c(r)\ne q$. So $\sigma_c(r)=r$, meaning
$r\in\RP^2(\R)$.

\smallskip
\emph{Continuous dependence of $r$ on $(s,t)$.} Define the open
region
\[
\Delta \;:=\; \{(s,t)\in I^2 : s<t\}\subset\R^2,
\]
which is convex (hence connected). Parametrize points of
the projective line $\ell(s,t)$ by $[\alpha:\beta]\in\RP^1$ via
\[
[\alpha:\beta]\;\longmapsto\;\alpha\,\widetilde p(s) + \beta\,\widetilde q(t)
\;\in\;\RP^2,
\]
where $\widetilde p(s),\widetilde q(t)\in\R^3\setminus\{0\}$ are
homogeneous representatives of $p(s)=(s,f(s),1)$ and
$q(t)=(t,f(t),1)$. Substituting into the homogeneous cubic $\bar F$
yields a homogeneous cubic
\[
\Phi_{s,t}(\alpha,\beta)\;=\;\bar F(\alpha\,\widetilde p(s) +\beta\,\widetilde q(t))
\]
in $(\alpha,\beta)$, whose coefficients depend smoothly on $(s,t)$.
Indeed, they are polynomial functions of $s$, $f(s)$, $t$, and $f(t)$,
and $f$ is smooth on $I$.

We argue the factorization explicitly, to make the continuity
transparent. Writing
\[
\Phi_{s,t}(\alpha,\beta)\;=\;A(s,t)\,\alpha^3 + B(s,t)\,\alpha^2\beta
+ C(s,t)\,\alpha\beta^2 + D(s,t)\,\beta^3,
\]
we have $A(s,t)=\bar F(\widetilde p(s))=0$ and
$D(s,t)=\bar F(\widetilde q(t))=0$ for all $(s,t)\in\Delta$ (since
$p(s),q(t)\in\bar C$). Hence
\[
\Phi_{s,t}(\alpha,\beta)\;=\;\alpha\beta\bigl(B(s,t)\,\alpha + C(s,t)\,\beta\bigr),
\]
and the three projective roots of $\Phi_{s,t}$ are $[1:0]$, $[0:1]$,
and $[-C(s,t):B(s,t)]$ (the last provided $(B,C)\ne(0,0)$). If
$(B,C)=(0,0)$ at some $(s,t)\in\Delta$, then $\Phi_{s,t}\equiv 0$,
meaning $\ell(s,t)\subset\bar C$; but $\ell(s,t)$ has degree $1$ and
$\bar C$ is irreducible of degree $3$, so this is impossible.
Therefore $[-C:B]\in\RP^1$ is well-defined and continuous in
$(s,t)$ on all of $\Delta$, and the corresponding point
$r(s,t)=-C(s,t)\,\widetilde p(s)+B(s,t)\,\widetilde q(t)\in\RP^2$
(up to scalar) is continuous on $\Delta$. No continuity beyond
$\Delta$ itself is claimed or used in the open/closed/empty
arguments below; in particular, no claim is made about the
tangential limit $t\searrow s$.

\smallskip
\emph{The key topological argument.} Define
\[
B\;:=\;\bigl\{(s,t)\in\Delta : r(s,t)\in \bigl(p(s)\,q(t)\bigr)^\circ\bigr\}.
\]
We show $B=\varnothing$.

\textit{$B$ is open in $\Delta$:} if $(s_0,t_0)\in B$, then
$r(s_0,t_0)$ lies in the interior of the chord, hence at positive
Euclidean distance from $\{p(s_0),q(t_0)\}$. By continuity of $r$,
$p$, and $q$ in $(s,t)$, this positive distance is preserved under
small perturbations of $(s_0,t_0)$, so a neighborhood of
$(s_0,t_0)$ in $\Delta$ lies in $B$.

\textit{$B$ is closed in $\Delta$:} let $(s_n,t_n)\to(s_0,t_0)$ in
$\Delta$ (so $s_0<t_0$) with $(s_n,t_n)\in B$. By continuity of $r$
on $\Delta$ (established above at the interior point $(s_0,t_0)$),
$r(s_n,t_n)\to r(s_0,t_0)$ in $\RP^2$; no subsequence or compactness
argument is needed, since $r$ is continuous. Each
$r(s_n,t_n)\in (p(s_n)q(t_n))^\circ\subset\R^2$, and the closed
chord segments $[p(s_n),q(t_n)]\subset\R^2$ converge (in Hausdorff
distance) to the bounded segment $[p(s_0),q(t_0)]\subset\R^2$; hence
the sequence $(r(s_n,t_n))_n$ is contained in a bounded region of
$\R^2$, and its limit $r(s_0,t_0)$, which a priori is a point of
$\RP^2$, is therefore a point of $\R^2$ lying in the closed segment
$[p(s_0),q(t_0)]$.

Suppose for contradiction $r(s_0,t_0)=p(s_0)$. By continuity of
$r$ on $\Delta$, the third root of $\Phi_{s,t}$ (in the
parametrization above) converges to $[1:0]$ as $(s,t)\to(s_0,t_0)$.
But $[1:0]$ is already a root of $\Phi_{s_0,t_0}$ (corresponding to
$p(s_0)$). Hence $\Phi_{s_0,t_0}$ has a root of multiplicity at
least $2$ at $[1:0]$, which means
$\mu_{p(s_0)}(\ell(s_0,t_0)\cap\bar C)\ge 2$, i.e.\ $\ell(s_0,t_0)$
is tangent to $\bar C$ at $p(s_0)$. But we showed above that for
any $(s,t)\in\Delta$, $\ell(s,t)$ is not tangent to $\bar C$ at
$p(s)$ or $q(t)$. Contradiction. Similarly $r(s_0,t_0)\ne q(t_0)$.
Hence $r(s_0,t_0)\in (p(s_0)q(t_0))^\circ$, so $(s_0,t_0)\in B$.

\textit{$B$ is empty near the diagonal} (local convexity argument).
Fix $s_0\in I$; we exhibit a neighborhood of $(s_0,s_0)$ in
$\bar\Delta$ (the closure of $\Delta$) whose intersection with
$\Delta$ is disjoint from $B$.

Since $V$ is open in $\bar C(\R)$ (as a connected component of
$\bar C(\R)\setminus E$), and $\bar C(\R)$ is closed in $\RP^2$, and
$p(s_0)\in V$ is disjoint from the closed set $\bar C(\R)\setminus V$,
there exists an open ball $U\subset\R^2$ centered at $p(s_0)$ such
that $U\cap\bar C(\R)\subset V$. By continuity of the
parametrization $x\mapsto(x,f(x))$, there exists $\epsilon>0$ such
that for all $s,t\in I$ with $|s-s_0|,|t-s_0|<\epsilon$, both
$p(s),q(t)\in U$, and since $U$ is convex, the closed segment
$[p(s),q(t)]\subset U$.

For such $(s,t)\in\Delta$ (i.e., also with $s<t$), we claim
$(p(s)q(t))^\circ\cap\bar C(\R)=\varnothing$. Indeed, any point of
$\bar C(\R)$ in this open segment lies in $U\cap\bar C(\R)\subset V$.
A point of $V$ in this open segment has $x$-coordinate in the open
interval $(s,t)$ (the $x$-range of the open chord segment). The
portion of $V$ with $x$-coordinate in $(s,t)$ is precisely the
graph $\{(x,f(x)):x\in(s,t)\}$. By strict convexity (or strict
concavity) of $f$ on $I\supset[s,t]$, this graph lies strictly
above (or strictly below) the chord over $(s,t)$, so no point of
this graph lies on the chord. Therefore
$(p(s)q(t))^\circ\cap V=\varnothing$, and hence
$(p(s)q(t))^\circ\cap\bar C(\R)=\varnothing$.

In particular, $r(s,t)\notin(p(s)q(t))^\circ$, so $(s,t)\notin B$.
Thus $B$ is disjoint from the neighborhood
$\{(s,t)\in\Delta:|s-s_0|,|t-s_0|<\epsilon\}$ of $(s_0,s_0)$.

\textit{Conclusion.} $B$ is a clopen subset of $\Delta$ that is not
all of $\Delta$ (it fails near the diagonal, by the local convexity
argument just given, and $\Delta$ accumulates on the diagonal). Since
$\Delta$ is connected (it is a convex open subset of $\R^2$),
$B=\varnothing$.

Therefore for all $p,q\in V$ the third intersection point $r=r(s,t)$
does not lie in $(pq)^\circ$. Combined with the fact that $p$ and $q$
are the only other intersections of $\ell$ with $\bar C$, we
conclude $(pq)^\circ\cap \bar C(\R)=\varnothing$: no real point of
$\bar C$ other than $p$ and $q$ lies on the line through $p,q$ inside
the segment. So $V$ is a visibility patch of $\bar C$.

\smallskip
\textbf{Step 4: No three patch points are collinear.} With Step~3
already established (i.e., for every $V=V_i$, $(pq)^\circ\cap
\bar C(\R)=\varnothing$ for all distinct $p,q\in V$), we now derive
the no-three-collinear property as a direct corollary. Suppose
$p,q,r$ are three distinct points of $V$ lying on a common line
$\ell$. Then $\ell\not\subset\bar C$ (degree reasons), and B\'ezout
gives $|\ell\cap\bar C|\le 3$, so $\ell\cap\bar C=\{p,q,r\}$. One of
$p,q,r$ lies strictly between the other two on $\ell$, say
$q\in(pr)^\circ$. Then $q\in V\subset\bar C(\R)$ and
$q\in(pr)^\circ$, giving $q\in(pr)^\circ\cap\bar C(\R)$, which
contradicts the visibility-patch property established in Step~3
applied to the distinct points $p,r\in V$. Hence no such triple
exists. (Note: Step~3 is proved independently of the
no-three-collinear claim, so this argument is not circular.)
\end{proof}

\begin{remark}
The constants $|E|\le 13$ and $M\le 15$ are not claimed to be sharp. A smooth real
cubic has exactly $3$ real flexes (the remaining $6$ of the nine
complex flexes come in complex-conjugate pairs; see
\cite[Cor.~6.5 and Theorem~2.10]{BonifantMilnor}). The vertical-tangency
estimate $|E_{\mathrm{vt}}|\le 6$ is the direct B\'ezout bound used
above, and this conservative uniform estimate is sufficient for the
applications below.
\end{remark}

\section{Visibility on a conic}\label{sec:conic}

The conic case is simpler because a line meets an irreducible conic in
at most two points.

\begin{lemma}\label{lem:conic-visibility}
Let $Q\subset\RP^2$ be an irreducible real projective conic. Then
$Q(\R)$ is a visibility patch of $Q$: for any two distinct points
$p,q\in Q(\R)$, the open segment $(pq)^\circ$ (in any affine chart
containing $p$ and $q$) contains no point of $Q(\R)$. Moreover, no
line meets $Q(\R)$ in three points.
\end{lemma}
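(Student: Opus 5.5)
The whole lemma rests on a single B\'ezout count: a line and an irreducible conic without common component meet in at most two points. We prove the ``moreover'' clause first and then deduce visibility from it.

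\emph{Collinearity.} Let $\ell$ be a projective line. Since $Q$ is irreducible of degree $2$, it has no linear component, so $\ell\not\subset Q$. B\'ezout then gives $\sum_{r\in\ell\cap Q}\mu_r(\ell\cap Q)=2$ over $\C$. Hence $|\ell\cap Q(\R)|\le 2$, and no line meets $Q(\R)$ in three points.

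\emph{Visibility.} Fix distinct $p,q\in Q(\R)$ and an affine chart containing both, and let $\ell$ be the line through them. The points $p$ and $q$ already account for both intersections of $\ell$ with $Q$, each with multiplicity one. Therefore $\ell\cap Q(\R)=\{p,q\}$. The open segment $(pq)^\circ$ lies on $\ell$ and contains neither $p$ nor $q$, so $(pq)^\circ\cap Q(\R)=\varnothing$. This statement does not depend on the chart: $(pq)^\circ$ is a subset of $\ell$ in whichever chart it is formed, and $\ell$ has no real point of $Q$ other than $p,q$.

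\emph{Main obstacle.} The one point to check is that irreducibility really excludes a linear component. Irreducibility over $\R$ alone would allow a pair of complex-conjugate lines $X^2+Y^2=0$, whose real locus is a single point. For that degenerate case the lemma is vacuous, since there are no two distinct real points. Any other real line $\ell$ meets such a pair only at the real double point, so the bound $|\ell\cap Q(\R)|\le 2$ still holds.

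Unlike Lemma~\ref{lem:cubic-visibility}, no patch decomposition, flex analysis, or topological argument is needed. The third intersection point $r$ that had to be controlled in the cubic case simply does not exist for a conic.
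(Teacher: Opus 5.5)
Your proof is correct and follows the paper's argument: the chord line cannot be a component of the irreducible conic, so B\'ezout gives $\ell\cap Q=\{p,q\}$, which yields both the empty open segment in any chart and the no-three-collinear statement. Your remark on the real-irreducible pair of conjugate lines $X^2+Y^2=0$ is a harmless refinement that the paper leaves implicit; the point that makes it harmless is that a real line can never be a component of a real-irreducible conic.
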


\begin{proof}
If $Q(\R)=\varnothing$ the statement is vacuous, so assume
$Q(\R)\ne\varnothing$. Let $p,q\in Q(\R)$ be distinct. The chord line
$\ell$ through $p$ and $q$ is not contained in $Q$ (irreducibility of
$Q$, combined with $\deg\ell=1<2=\deg Q$). By B\'ezout,
$|\ell\cap Q|\le 2$, so $\ell\cap Q=\{p,q\}$. Hence
$(pq)^\circ\cap Q(\R)=\varnothing$.

The second statement is immediate: a line meets $Q$ in at most~$2$
points.
\end{proof}

\section{The cubic-container theorem}\label{sec:container-proof}

We now prove the deterministic container theorem. The statement is slightly more detailed than the numerical bound in the introduction: it gives a visible clique cover of the cubic-supported part of the set.

\begin{theorem}[Cubic-container theorem with clique covers]\label{thm:cubic-container}
Let $A\subset\R^2$ be finite, $|A|=n$, with no $k$ collinear points. Let $\Gamma\subset\RP^2$ be a real projective cubic and set
\[
  s:=|A\setminus \Gamma|,\qquad m:=|A\cap\Gamma|=n-s.
\]
Assume $m>3(k-1)$. Then $\Gamma$ is not a union of three real lines, and one of the following two alternatives holds.

\begin{enumerate}[label=\textup{(\roman*)}]
\item If $\Gamma=Q\cup L$, where $Q$ is an irreducible real conic and $L$ is a real line, and if $a:=|A\cap L|$, then
\[
  \vartheta_A(A\cap Q)\le s+a+1
\]
and hence
\[
  \omega(V(A))\ge \frac{|A\cap Q|}{s+a+1}
  \ge \frac{n-s-(k-1)}{s+k}.
\]
Moreover $A\cap\Gamma$ has a visible clique cover in $A$ with at most $s+2a+1\le s+2k-1$ parts.

\item If $\Gamma$ is irreducible, then $A\cap\Gamma$ has a visible clique cover in $A$ with at most
\[
  15(s+1)+13
\]
parts, and
\[
  \omega(V(A))\ge \frac{n-s-13}{15(s+1)}.
\]
\end{enumerate}
Consequently,
\begin{equation}\label{eq:cubic-container-bound}
\omega(V(A))\ge
\max\left\{1,
\min\left(
\frac{n-s-(k-1)}{s+k},
\frac{n-s-13}{15(s+1)}
\right)\right\}.
\end{equation}
\end{theorem}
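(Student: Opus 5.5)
The plan is to split according to the real type of $\Gamma$ listed in Section~\ref{sec:prelim}, and to feed each piece into Lemma~\ref{lem:blocker-colouring}.

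\emph{Three lines excluded.} If $\Gamma$ is a union of three real lines, possibly with repetitions, then each line contains at most $k-1$ points of $A$. Hence $m\le 3(k-1)$, which contradicts the hypothesis. This leaves the line-plus-conic case and the irreducible case.

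\emph{Case (i).} Apply Lemma~\ref{lem:blocker-colouring} with $V=Q(\R)$. By Lemma~\ref{lem:conic-visibility}, no line meets $V$ in three points. For a blocker $r\in A\setminus V$ lying in some chord $(pq)^\circ$ with $p,q\in Q$, the same lemma shows $r\notin Q$. So $r$ is either one of the $s$ off-cubic points or one of the $a$ points of $A\cap L$ not on $Q$. Thus $b\le s+a$, and $\vartheta_A(A\cap Q)\le s+a+1$.

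Since $L$ carries at most $k-1$ points, we have $|A\cap Q|\ge m-a\ge n-s-(k-1)$ and $s+a+1\le s+k$. This gives the stated bound on $\omega$. For the full cover, add the points of $A\cap L\setminus Q$; placing each in its own class costs at most $a$ more parts, giving at most $s+2a+1$ parts. The slack in the bound $2a$ comes from counting $L$-points both as blockers and as singletons. Finally $a\le k-1$, so the total is at most $s+2k-1$.

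\emph{Case (ii).} Use Lemma~\ref{lem:cubic-visibility}. An affine point set lies in the chart, and we may choose the chart so that $L_\infty\not\subset\bar C$; this is automatic since $\Gamma$ is irreducible. We obtain $|E|\le 13$ and patches $V_1,\dots,V_M$ with $M\le 15$. For each $i$, apply Lemma~\ref{lem:blocker-colouring} with $V=V_i$, which has no three collinear points. Any blocker $r$ of a chord between two points of $V_i$ cannot lie on $\bar C(\R)$, because $V_i$ is a visibility patch. Therefore $r\in A\setminus\Gamma$ and $b\le s$, so $\vartheta_A(A\cap V_i)\le s+1$.

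Putting the points of $A\cap E$ in singleton classes gives a cover with at most $15(s+1)+13$ parts. By pigeonhole, some patch contains at least $(m-13)/15$ points, and its best colour class has at least $(n-s-13)/(15(s+1))$ points.

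\emph{Conclusion.} The consequence (\ref{eq:cubic-container-bound}) follows by taking the minimum over the two cases, together with the trivial bound $\omega\ge1$.

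The main subtlety is the chart compatibility in case (ii): Lemma~\ref{lem:cubic-visibility} is stated in its own chart after a generic affine change of coordinates. I would check that such changes preserve visibility in $A$, so that the decomposition transfers back.
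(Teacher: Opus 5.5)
Your proposal is correct and follows the paper's proof essentially step for step: you exclude three lines by the $3(k-1)$ count; in case (i) you apply Lemma~\ref{lem:blocker-colouring} to $Q$ with blockers confined to $(A\cap L)\cup(A\setminus\Gamma)$ and add $a$ singletons; in case (ii) you apply it patchwise with blockers confined to $A\setminus\Gamma$, add $|A\cap E|\le 13$ singletons, and pigeonhole over the at most $15$ patches. The chart issue you flag needs no extra work, since Lemma~\ref{lem:cubic-visibility} is already stated for an arbitrary affine chart with $L_\infty\not\subset\bar C$, and the generic affine change of coordinates is handled inside its proof.
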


\begin{proof}
Since $m=|A\cap\Gamma|>3(k-1)$, the cubic $\Gamma$ cannot be a union of three real lines: each affine line component contains at most $k-1$ points of $A$, and a line component equal to the line at infinity contains no point of $A$.

We distinguish the remaining two cases.

\medskip
\noindent\textbf{Case 1: $\Gamma=Q\cup L$, where $Q$ is an irreducible real conic and $L$ is a real line.}
Let $a:=|A\cap L|$. Since $a\le k-1$,
\[
  |A\cap Q|\ge n-s-a\ge n-s-(k-1).
\]
The subcase $Q(\R)=\varnothing$ is impossible under the hypothesis $m>3(k-1)$, because then $A\cap Q=\varnothing$ and $m\le |A\cap L|\le k-1$. Hence $Q(\R)\ne\varnothing$.

Let $V=Q(\R)\cap\R^2$ and $X=A\cap V$. By Lemma~\ref{lem:conic-visibility}, no line meets $V$ in three points. A blocker of a pair from $X$ cannot lie on $Q$, because a line meets the irreducible conic $Q$ in at most two points. Hence all blockers of pairs from $X$ lie either on $L$ or outside $\Gamma$. Thus the blocker set $B$ in Lemma~\ref{lem:blocker-colouring} has size
\[
  b\le |A\cap L|+|A\setminus\Gamma|=a+s.
\]
The lemma gives
\[
  \vartheta_A(A\cap Q)\le s+a+1,
  \qquad
  \omega(V(A))\ge \frac{|A\cap Q|}{s+a+1}
  \ge \frac{n-s-(k-1)}{s+k}.
\]
To cover all of $A\cap\Gamma$, add the points of $(A\cap L)\setminus Q$ as singletons. This uses at most $a$ further parts, so
\[
  \vartheta_A(A\cap\Gamma)\le s+2a+1\le s+2k-1.
\]

\medskip
\noindent\textbf{Case 2: $\Gamma$ is irreducible.}
Apply Lemma~\ref{lem:cubic-visibility} in the affine chart containing $A$. We obtain a finite exceptional set $E\subset\Gamma(\R)$ with $|E|\le 13$ and visibility patches $V_1,\ldots,V_M\subset\R^2$ with $M\le 15$ such that
\[
  \Gamma(\R)=E\sqcup V_1\sqcup\cdots\sqcup V_M.
\]
For each $i$, put $X_i=A\cap V_i$. If $p,q\in X_i$, then the visibility-patch property gives
\[
  (pq)^\circ\cap \Gamma(\R)=\varnothing.
\]
Therefore every blocker of a pair from $X_i$ lies outside $\Gamma$, and the blocker set in Lemma~\ref{lem:blocker-colouring} has size at most $s$. Since no line meets $V_i$ in three points, the lemma gives
\[
  \vartheta_A(X_i)\le s+1
\]
for every $i$.

Adding the points of $A\cap E$ as singletons, we get
\[
  \vartheta_A(A\cap\Gamma)\le M(s+1)+|A\cap E|\le 15(s+1)+13.
\]
Also, by pigeonholing, some patch satisfies
\[
  |A\cap V_i|\ge \frac{|A\cap\Gamma|-|A\cap E|}{M}
  \ge \frac{n-s-13}{15}.
\]
Applying Lemma~\ref{lem:blocker-colouring} to that patch gives
\[
  \omega(V(A))\ge \frac{n-s-13}{15(s+1)}.
\]

The two alternatives imply the numerical bound \eqref{eq:cubic-container-bound}. The maximum with $1$ only records the trivial nonempty-clique lower bound in small cases where the displayed fractions are less than one.
\end{proof}

\section{Point sets with few ordinary lines}\label{sec:few-ordinary-proof}

The Green--Tao theorem now upgrades the deterministic cubic-container theorem to a structural result for point sets with few ordinary lines.

\begin{theorem}[Few ordinary lines: bounded clique cover and linear clique]\label{thm:few-ordinary-linear}
For every integer $k\ge 2$ and every $K>0$ there are constants $c=c(k,K)>0$, $C_{\rm cov}=C_{\rm cov}(k,K)$ and $n_0=n_0(k,K)$ such that the following holds. If $A\subset\R^2$ has $n\ge n_0$ points, no $k$ collinear points, and at most $Kn$ ordinary lines, then there is a subset $A_0\subset A$ with
\[
  |A\setminus A_0|\le c_{\rm GT}K
\]
which has a visible clique cover in $A$ with at most $C_{\rm cov}$ parts. In particular,
\[
  \omega(V(A))\ge c n.
\]
More explicitly, for all sufficiently large $n$,
\begin{equation}\label{eq:explicit-main-bound}
\omega(V(A))\ge
\max\left\{1,
\min\left(
\frac{n-c_{\rm GT}K-(k-1)}{c_{\rm GT}K+k},
\frac{n-c_{\rm GT}K-13}{15(c_{\rm GT}K+1)}
\right)\right\}.
\end{equation}
\end{theorem}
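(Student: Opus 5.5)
The plan is to combine Theorem~\ref{thm:GT} with Theorem~\ref{thm:cubic-container}. Regard $A\subset\R^2\subset\RP^2$. Take $n\ge n_{\rm GT}(K)$. Then Theorem~\ref{thm:GT} gives a real projective cubic $\Gamma$ with $s:=|A\setminus\Gamma|\le c_{\rm GT}K$. Set $A_0:=A\cap\Gamma$, so that $|A\setminus A_0|\le c_{\rm GT}K$.

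To apply Theorem~\ref{thm:cubic-container} we need $m=n-s>3(k-1)$. This holds once
\[
  n_0\ge \max\{n_{\rm GT}(K),\; c_{\rm GT}K+3(k-1)+1\}.
\]
The theorem then excludes the three-line case and yields the needed clique covers.
\begin{itemize}
\item If $\Gamma$ is a conic plus a line, $A_0$ is covered by at most $s+2k-1\le c_{\rm GT}K+2k-1$ visible cliques.
\item If $\Gamma$ is irreducible, $A_0$ is covered by at most $15(s+1)+13\le 15(c_{\rm GT}K+1)+13$ visible cliques.
\end{itemize}
So we may take
\[
  C_{\rm cov}:=\max\{c_{\rm GT}K+2k-1,\;15c_{\rm GT}K+28\}.
\]
This bound depends only on $k$ and $K$.

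For the clique bound, apply \eqref{eq:cubic-container-bound}. Its right-hand side is decreasing in $s$: each fraction has numerator decreasing and denominator increasing in $s$, and both numerators are positive for large $n$. So we may substitute $s\le c_{\rm GT}K$, which gives \eqref{eq:explicit-main-bound}. Alternatively, pigeonhole over the $C_{\rm cov}$ parts of the cover of $A_0$ to get
\[
  \omega(V(A))\ge \frac{n-c_{\rm GT}K}{C_{\rm cov}}.
\]
If $n_0$ is also at least $2c_{\rm GT}K$, this is at least $n/(2C_{\rm cov})$, so $c:=1/(2C_{\rm cov})$ works.

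The main care point is the monotonicity in $s$. The denominators $s+k$ and $15(s+1)$ are positive, and the numerators are nonnegative once $n$ exceeds $c_{\rm GT}K+13+k$, so replacing $s$ by its upper bound only decreases the bound. I would add this to the requirements on $n_0$.

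One further check is that Theorem~\ref{thm:GT} is applied to $A$ viewed as a subset of $\RP^2$. Ordinary lines of $A$ are the same projectively and affinely, since no point of $A$ lies at infinity, so the hypotheses transfer directly.
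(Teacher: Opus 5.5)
Your proposal is correct and follows the paper's proof essentially step for step. You apply Theorem~\ref{thm:GT} to get a cubic with $s\le c_{\rm GT}K$, set $A_0=A\cap\Gamma$, enlarge $n_0$ so that $n-s>3(k-1)$, read $C_{\rm cov}$ off the two alternatives of Theorem~\ref{thm:cubic-container}, and substitute $s\le c_{\rm GT}K$ into its clique bound; your explicit check that this bound is decreasing in $s$, and the pigeonhole route to $c=1/(2C_{\rm cov})$, only make precise steps the paper leaves implicit.
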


\begin{proof}
Let $A\subset\R^2$ have $n$ points, no $k$ collinear points, and at most $Kn$ ordinary lines. Assume $n\ge n_{\rm GT}(K)$. By Theorem~\ref{thm:GT}, there is a real cubic $\Gamma\subset\RP^2$ with
\[
  s:=|A\setminus\Gamma|\le c_{\rm GT}K.
\]
Let $A_0:=A\cap\Gamma$. If $n-s\le 3(k-1)$, then $n\le c_{\rm GT}K+3(k-1)$, which is excluded once $n$ is larger than this constant. Hence Theorem~\ref{thm:cubic-container} applies.

The clique-cover assertion follows from the two alternatives in Theorem~\ref{thm:cubic-container}. For example, one may take
\[
C_{\rm cov}(k,K)=
\max\{\lceil c_{\rm GT}K\rceil+2k-1,
15(\lceil c_{\rm GT}K\rceil+1)+13\}.
\]
The explicit clique lower bound \eqref{eq:explicit-main-bound} follows by substituting $s\le c_{\rm GT}K$ into \eqref{eq:cubic-container-bound}. For fixed $k$ and $K$, the right-hand side is $\Omega_{k,K}(n)$.
\end{proof}

\begin{corollary}[Few-ordinary-lines version of Big-Line--Big-Clique]\label{cor:fixed-ell}
For every $k,\ell\ge 2$ and every $K>0$ there exists $n_0(k,\ell,K)$ such that every finite set $A\subset\R^2$ with $|A|\ge n_0(k,\ell,K)$, no $k$ collinear points, and at most $Kn$ ordinary lines contains $\ell$ mutually visible points.
\end{corollary}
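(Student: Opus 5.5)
This corollary follows directly from Theorem~\ref{thm:few-ordinary-linear}; the only work is choosing the threshold $n_0(k,\ell,K)$ so that the linear lower bound exceeds $\ell$. Fix $k,\ell\ge 2$ and $K>0$, and let $n=|A|$. First take $n_0\ge n_0(k,K)$, the threshold from Theorem~\ref{thm:few-ordinary-linear}. This guarantees $n\ge n_{\rm GT}(K)$ and $n>c_{\rm GT}K+3(k-1)$, so the theorem applies. It gives a cubic $\Gamma$ with $s=|A\setminus\Gamma|\le c_{\rm GT}K$, and the explicit bound \eqref{eq:explicit-main-bound} holds.

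Next I would make the explicit bound quantitative rather than relying on the abstract constant $c(k,K)$. Both fractions in \eqref{eq:explicit-main-bound} are increasing in $n$. It therefore suffices to require
\[
  \frac{n-c_{\rm GT}K-(k-1)}{c_{\rm GT}K+k}\ge \ell
  \quad\text{and}\quad
  \frac{n-c_{\rm GT}K-13}{15(c_{\rm GT}K+1)}\ge \ell .
\]
These hold once
\[
  n\ge \ell\bigl(15(c_{\rm GT}K+1)+c_{\rm GT}K+k\bigr)+c_{\rm GT}K+k+13 .
\]
So I would set $n_0(k,\ell,K)$ to be the maximum of $n_0(k,K)$, $n_{\rm GT}(K)$, $c_{\rm GT}K+3(k-1)+1$, and the right-hand side above.

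Alternatively, one can invoke $\omega(V(A))\ge cn$ and take $n\ge \ell/c$. The explicit route is preferable because it gives a concrete $n_0$ with the same $\exp\exp(CK^C)$ dependence inherited from Theorem~\ref{thm:GT}.

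The main care point is to apply Theorem~\ref{thm:cubic-container} with the actual $s$ rather than the upper bound $c_{\rm GT}K$. Both fractions are decreasing in $s$, so replacing $s$ by $c_{\rm GT}K$ only weakens them and the inequality survives. With that checked, a maximum clique of $V(A)$ has at least $\ell$ vertices, and any $\ell$ of them are mutually visible points of $A$. There is no real obstacle beyond this monotonicity check.
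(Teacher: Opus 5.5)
Your proposal is correct and is the same argument as the paper's: choose $n_0$ large enough for Theorem~\ref{thm:GT} (and hence Theorem~\ref{thm:few-ordinary-linear}) to apply, and large enough that the explicit bound \eqref{eq:explicit-main-bound} is at least $\ell$. Your explicit threshold and the monotonicity-in-$s$ check spell out details the paper leaves implicit, and your arithmetic is correct.
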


\begin{proof}
Choose $n_0$ large enough that the explicit lower bound \eqref{eq:explicit-main-bound} is at least $\ell$, and also large enough for Theorem~\ref{thm:GT} to apply.
\end{proof}

\section[Calibration: the (4,4) case via cubic containers]{Calibration: the \texorpdfstring{$(4,4)$}{(4,4)} case via cubic containers}\label{sec:k4l4}

The ordinary-line hypothesis is substantive in the first nontrivial no-four-collinear case.  The following proposition is not new as a case of the known $\ell\le 5$ Big-Line--Big-Clique theorem, but it is a useful calibration: in the $(k,\ell)=(4,4)$ boundary, the absence of a visible $K_4$ itself forces the few-ordinary-lines regime.

\begin{proposition}[No four collinear and no visible four force few ordinary lines]\label{prop:44-few-ordinary}
Let $A\subset\R^2$ have $n$ points and no four collinear points.  If $V(A)$ has no $K_4$, then $A$ spans at most $n$ ordinary lines.
\end{proposition}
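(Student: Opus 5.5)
The plan is a pure double-counting argument combined with Tur\'an's theorem. Because there are no four collinear points, every line spanned by $A$ contains either exactly two or exactly three points of $A$. Write $t_2$ for the number of ordinary lines and $t_3$ for the number of $3$-rich lines. Each pair of points lies on exactly one spanned line, so counting pairs gives
\[
  \binom{n}{2}=t_2+3t_3 .
\]

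Next I would compute the number of non-edges of $V(A)$ exactly. A pair $p,q$ fails to be visible iff some third point of $A$ lies in $(pq)^\circ$. In particular the line $pq$ must then be $3$-rich. On a $3$-rich line with points $x<y<z$ in their order along the line, the pairs $xy$ and $yz$ are visible and only the outer pair $xz$ is blocked. Hence $V(A)$ has exactly $t_3$ non-edges, and
\[
  |E(V(A))|=\binom{n}{2}-t_3 .
\]

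Since $V(A)$ contains no $K_4$, Tur\'an's theorem gives $|E(V(A))|\le n^2/3$. Therefore
\[
  t_3\ge \binom{n}{2}-\frac{n^2}{3}=\frac{n^2}{6}-\frac{n}{2}.
\]
Substituting this into the pair count yields
\[
  t_2=\binom{n}{2}-3t_3\le \frac{n^2-n}{2}-\frac{n^2}{2}+\frac{3n}{2}=n,
\]
which is the claim.

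I do not expect a real obstacle. The step that needs care is the exact identification of non-visible pairs with $3$-rich lines, one blocked pair per line. This is where the hypothesis of no four collinear points is used. With four or more points on a line, a line would contribute several blocked pairs and the identity would fail. Because the Tur\'an bound is used only in the weak form $|E(V(A))|\le n^2/3$, no divisibility cases arise.
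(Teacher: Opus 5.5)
Your proof is correct and is essentially the paper's argument: the same pair count $\binom{n}{2}=t_2+3t_3$, the same exact edge count (your $\binom{n}{2}-t_3$ equals the paper's $t_2+2t_3$), and the weak Tur\'an bound $e(V(A))\le n^2/3$. The only difference is that you solve for $t_3$ first, whereas the paper writes $t_2=3e(V(A))-2\binom{n}{2}$ directly.
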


\begin{proof}
Let $t_2$ be the number of ordinary lines and $t_3$ the number of lines containing exactly three points of $A$.  Since no four points are collinear, every line determined by two points of $A$ is counted by either $t_2$ or $t_3$, and
\[
  \binom n2=t_2+3t_3.
\]
The visibility graph has exactly
\[
  e(V(A))=t_2+2t_3
\]
edges: an ordinary line contributes its unique pair, while a three-point line contributes exactly the two adjacent visible pairs along the line.  If $V(A)$ has no $K_4$, Tur\'an's theorem gives
\[
  e(V(A))\le \left(1-\frac13\right)\frac{n^2}{2}=\frac{n^2}{3}.
\]
Eliminating $t_3$ from the two displayed identities gives
\[
  t_2=3e(V(A))-2\binom n2\le n^2-n(n-1)=n.
\]
\end{proof}

\begin{corollary}[The $(4,4)$ case via cubic containers]\label{cor:44}
There is an absolute $n_0$ such that every $A\subset\R^2$ with $|A|\ge n_0$ contains either four collinear points or four mutually visible points.
\end{corollary}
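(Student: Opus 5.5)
The plan is to chain Proposition~\ref{prop:44-few-ordinary} with Corollary~\ref{cor:fixed-ell} at the parameters $k=\ell=4$ and $K=1$. Suppose $A\subset\R^2$ is large, has no four collinear points, and has no four mutually visible points. We aim for a contradiction once $|A|$ exceeds an absolute constant.

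\emph{Step 1.} Since $V(A)$ contains no $K_4$ and $A$ has no four collinear points, Proposition~\ref{prop:44-few-ordinary} applies directly. It shows that $A$ spans at most $n=|A|$ ordinary lines, so the hypothesis ``at most $Kn$ ordinary lines'' holds with $K=1$.

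\emph{Step 2.} Apply Corollary~\ref{cor:fixed-ell} with $k=4$, $\ell=4$ and $K=1$. Take $n_0:=n_0(4,4,1)$, which is an absolute constant. For $|A|\ge n_0$, the corollary produces four mutually visible points, contradicting the assumption that $V(A)$ has no $K_4$. Hence every $A$ with $|A|\ge n_0$ contains four collinear points or four mutually visible points.

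If one wants an explicit $n_0$, it suffices to require two things. First, $n\ge n_{\rm GT}(1)$, so that Theorem~\ref{thm:GT} yields a cubic containing all but $c_{\rm GT}$ points. Second, the bound \eqref{eq:explicit-main-bound} at $k=4$, $K=1$ must be at least $4$; concretely, $n-c_{\rm GT}-13\ge 60(c_{\rm GT}+1)$ and $n-c_{\rm GT}-3\ge 4(c_{\rm GT}+4)$.

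There is no real obstacle, since all the work sits in the cited results. The one point to check is that Theorem~\ref{thm:few-ordinary-linear} also needs $n-s>3(k-1)=9$, which holds automatically for $n$ this large.
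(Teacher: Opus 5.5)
Your proposal is correct and follows essentially the paper's argument. Proposition~\ref{prop:44-few-ordinary} gives at most $n$ ordinary lines, and the few-ordinary-lines result at $k=4$, $K=1$ then forces a visible $K_4$ for large $n$. The only difference is cosmetic: the paper invokes the linear bound $cn$ of Theorem~\ref{thm:few-ordinary-linear} directly (contradiction once $n\ge 4/c$), while you go through Corollary~\ref{cor:fixed-ell}, which is that same explicit bound unpacked.
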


\begin{proof}
Assume that $A$ has no four collinear points and no four mutually visible points.  By Proposition~\ref{prop:44-few-ordinary}, $A$ has at most $n$ ordinary lines.  Applying Theorem~\ref{thm:few-ordinary-linear} with $k=4$ and $K=1$ gives a visible clique of size at least $c n$ for all sufficiently large $n$, contradiction once $n\ge 4/c$ and $n\ge n_0(4,1)$.
\end{proof}

\section{A bounded-degree algebraic-curve case}\label{sec:bounded-degree}

The cubic-container theorem also combines naturally with the Elekes--Szab\'o theorem on triple lines on algebraic curves. This gives a second, independent structured regime in which the Big-Line--Big-Clique conclusion follows.

We use the following consequence of Elekes and Szab\'o's orchard theorem. For fixed degree $d$, if an irreducible algebraic curve of degree at most $d$ supports a set of $n$ points determining quadratically many distinct $3$-rich lines, then, for sufficiently large $n$, the curve must be cubic. More precisely, this is the special case $\Gamma_1=\Gamma_2=\Gamma_3$ of \cite[Theorem~4.1]{ElekesSzabo2024}; see also Remark~4.2 of that paper for the reducible-container interpretation.

\begin{theorem}[Irreducible bounded-degree curve case]\label{thm:bounded-degree-curve}
For every $d\ge 1$ and every $k,\ell\ge2$ there exists $n_0=n_0(d,k,\ell)$ such that the following holds. Let $C\subset\R^2$ be an irreducible real algebraic curve of degree at most $d$, and let $A\subset C(\R)$ be finite with $|A|\ge n_0$. Then $A$ contains either $k$ collinear points or $\ell$ mutually visible points.
\end{theorem}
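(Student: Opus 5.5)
The plan is to argue by contradiction. Suppose $A\subset C(\R)$ has $n$ points, no $k$ collinear points, and no $\ell$ mutually visible points. We then split according to the degree of $C$.

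\textbf{Low-degree cases.} If $C$ is a line, then $n\le k-1$, so $n_0>k-1$ disposes of this case. If $C$ is an irreducible conic, Lemma~\ref{lem:conic-visibility} shows that no point of $C$ lies in the interior of a chord of $C$. Since $A\subset C$, every pair in $A$ is visible and $\omega(V(A))=n$. If $C$ is an irreducible cubic, apply Theorem~\ref{thm:cubic-container} with $\Gamma$ the projective closure of $C$ and $s=0$. Once $n>3(k-1)$, alternative (ii) gives $\omega(V(A))\ge (n-13)/15$, which is at least $\ell$ for $n$ large in terms of $k,\ell$.

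\textbf{Degree $4\le\deg C\le d$: Tur\'an.} Since $V(A)$ has no $K_\ell$, Tur\'an's theorem gives at least $\binom n2-(1-\frac1{\ell-1})\frac{n^2}{2}\ge c_\ell n^2$ non-visible pairs for $n$ large, with $c_\ell>0$ depending only on $\ell$. Each non-visible pair $\{p,q\}$ lies on a line $L$ with $|L\cap A|\ge3$. Such a line contains at most $k-1$ points, so it accounts for at most $\binom{k-1}{2}$ non-visible pairs. Hence the number of distinct $3$-rich lines of $A$ is at least $c_\ell n^2/\binom{k-1}{2}=:\alpha n^2$, with $\alpha=\alpha(k,\ell)>0$. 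In other words, $A$ lies on an irreducible curve of degree at most $d$ and determines quadratically many distinct triple lines.

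\textbf{Degree $4\le\deg C\le d$: Elekes--Szab\'o.} Invoke the Elekes--Szab\'o consequence quoted above, namely \cite[Theorem~4.1]{ElekesSzabo2024} with $\Gamma_1=\Gamma_2=\Gamma_3=C$. It says that if $n$ points on an irreducible curve of degree at most $d$ span at least $\alpha n^2$ distinct $3$-rich lines, then either $C$ is a cubic (or lower degree) or $n\le N(d,\alpha)$. Since $\deg C\ge4$, this forces $n\le N(d,\alpha(k,\ell))$. Taking
\[
n_0>\max\{N(d,\alpha),\,3(k-1)+15\ell+13,\,\text{Tur\'an threshold}\}
\]
yields a contradiction in every case.

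\textbf{Main obstacle.} The hardest step is checking that the quoted Elekes--Szab\'o statement applies exactly in this form. First, it must count distinct $3$-rich lines rather than collinear triples; this is why the reduction divides by $\binom{k-1}{2}$. Second, the density $\alpha$ must be allowed to depend on $k,\ell$, with the threshold depending only on $d$ and $\alpha$. Third, its conclusion in the irreducible case must be that $C$ is a cubic or a line. If instead the theorem is phrased with $n^{2-\eta}$ or in terms of collinear triples, the Tur\'an count still supplies $\gg n^2$ collinear triples, since each $3$-rich line carries at most $\binom{k-1}{3}$ triples, so the argument adapts. The remaining point is to confirm that degenerate lower-degree outcomes, such as a line or a conic, are already handled by the first paragraph.
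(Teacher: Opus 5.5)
Your proposal is correct and follows the paper's proof essentially step for step. Lines, irreducible conics (Lemma~\ref{lem:conic-visibility}) and irreducible cubics (Theorem~\ref{thm:cubic-container} with $s=0$) are disposed of directly. For $\deg C\ge 4$, Tur\'an's $\gg_\ell n^2$ non-visible pairs are converted, via at most $\binom{k-1}{2}$ pairs per $3$-rich line, into $\gg_{k,\ell} n^2$ distinct $3$-rich lines, contradicting the Elekes--Szab\'o theorem with $\Gamma_1=\Gamma_2=\Gamma_3=C$.

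The differences are cosmetic. The paper first dispatches $k\le 3$ (then there are no non-visible pairs at all), which avoids your division by $\binom{k-1}{2}=0$ when $k=2$. Also, in your fallback remark, the $\gg n^2$ collinear triples follow because each $3$-rich line carries at least one triple, not from the upper bound $\binom{k-1}{3}$.
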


\begin{proof}
Assume, for contradiction, that $A\subset C(\R)$ has $n$ points, no $k$ collinear points and no $\ell$ mutually visible points. We take $n$ sufficiently large as a function of $d,k,\ell$.

If $k\le3$, then no $k$ collinear points implies in particular that no three points of $A$ are collinear. Thus every pair of points of $A$ is visible in $A$, so $n<\ell$, impossible for large $n$. Hence, in the counting argument below, we may assume $k\ge4$.

If $C$ is a line, then $n\ge k$ already gives $k$ collinear points, so this case is impossible under the assumption. If $C$ is an irreducible conic, Lemma~\ref{lem:conic-visibility} implies that every two points of $A$ are visible in $A$, so $A$ itself is a visible clique. Thus $n<\ell$, again impossible for large $n$.

If $C$ is an irreducible cubic, apply Theorem~\ref{thm:cubic-container} with $\Gamma$ the projective closure of $C$ and $s=0$. The irreducible-cubic alternative gives
\[
  \omega(V(A))\ge \frac{n-13}{15},
\]
which is at least $\ell$ for large $n$. Hence this case is also impossible under the assumption.

It remains to consider $\deg C\ge4$. Since $V(A)$ has no $K_\ell$, Tur\'an's theorem gives at least
\[
  \frac{n^2}{2(\ell-1)}-\frac n2
\]
non-visible pairs. For $n\ge4(\ell-1)$, this is at least $n^2/(4(\ell-1))$. Each non-visible pair lies on a line containing a third point of $A$. Since no line contains $k$ points of $A$, each $3$-rich line contains at most $k-1$ points and hence accounts for at most $\binom{k-1}{2}$ such pairs. Therefore $A$ determines at least
\[
  c_{k,\ell} n^2,
  \qquad
  c_{k,\ell}:=\frac{1}{4(\ell-1)\binom{k-1}{2}},
\]
distinct $3$-rich lines.

Apply Elekes--Szab\'o \cite[Theorem~4.1]{ElekesSzabo2024} with $\Gamma_1=\Gamma_2=\Gamma_3=C$ and $H_1=H_2=H_3=A$. For the constant $c_{k,\ell}$ and degree $d$, that theorem supplies $\eta=\eta(c_{k,\ell},d)>0$ and $n_{\rm ES}=n_{\rm ES}(c_{k,\ell},d)$ such that, once $n\ge n_{\rm ES}$, the existence of at least $c_{k,\ell}n^2\ge c_{k,\ell}n^{2-\eta}$ distinct triple lines forces $C\cup C\cup C=C$ to be a cubic. This contradicts $\deg C\ge4$.

Thus the simultaneous failure of both conclusions is impossible for all sufficiently large $n$.
\end{proof}

\begin{theorem}[Robust low-degree algebraic containers]\label{thm:robust-low-degree}
Fix integers $d\ge1$, $k\ge2$, and $\ell\ge2$.  There are constants $\varepsilon=\varepsilon(d,k,\ell)>0$ and $N_0=N_0(d,k,\ell)$ with the following property.  Let $C\subset\R^2$ be an irreducible real algebraic curve of degree at most $d$, and let $A\subset\R^2$ be finite with no $k$ collinear points.  Put $X=A\cap C$ and $N=|X|$.  If
\[
  N\ge N_0,
  \qquad |A\setminus C|\le \varepsilon N,
\]
then $A$ contains $\ell$ mutually visible points.
\end{theorem}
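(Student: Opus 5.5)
We split according to the degree of $C$ and reduce to the proof of Theorem~\ref{thm:bounded-degree-curve}, with the off-curve points $A\setminus C$ treated as a small perturbation. Put $s:=|A\setminus C|\le\varepsilon N$. If $C$ is a line, then $N\ge N_0\ge k$ already gives $k$ collinear points, so this case cannot occur. If $C$ is an irreducible conic, apply Lemma~\ref{lem:blocker-colouring} with $V=C(\R)\cap\R^2$. By Lemma~\ref{lem:conic-visibility}, no line meets $V$ in three points and every blocker lies off $C$. This gives $\omega(V(A))\ge N/(s+1)\ge N/(\varepsilon N+1)$, which is at least $\ell$ once $\varepsilon<1/(2\ell)$ and $N_0\ge 2\ell$. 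If $C$ is an irreducible cubic, Theorem~\ref{thm:cubic-container}(ii) with $\Gamma=\bar C$ gives $\omega\ge (N-13)/(15(s+1))$. This is at least $\ell$ provided $\varepsilon\le 1/(30\ell)$ and $N_0$ is large, since then $15(\varepsilon N+1)\ell\le N/2+15\ell\le N-13$.

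The substantive case is $\deg C\ge 4$. Assume $V(A)$ has no $K_\ell$. Then the induced subgraph on $X$ has no $K_\ell$ either, because visibility in $A$ is inherited. By Tur\'an's theorem there are at least $N^2/(4(\ell-1))$ pairs in $X$ that are non-visible in $A$ once $N\ge 4(\ell-1)$. Each such pair $\{p,q\}$ has a blocker $r\in A$, and either $r\in X$ or $r\in A\setminus C$.

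We bound the pairs blocked only by off-curve points, exactly as in the proof of Lemma~\ref{lem:blocker-colouring}. For a fixed $r\in A\setminus C$, the pairs of $X$ that $r$ blocks lie on lines through $r$. Each such line contains at most $k-1$ points of $X$, so the pairs through $r$ form a graph of maximum degree at most $k-2$. Hence $r$ blocks at most $(k-2)N/2$ pairs, and all off-curve blockers together account for at most $s(k-2)N/2\le \varepsilon(k-2)N^2/2$ pairs. Choose $\varepsilon\le 1/(4(\ell-1)(k-2))$ (any $\varepsilon$ works if $k=2$). Then at least $N^2/(8(\ell-1))$ non-visible pairs have a blocker in $X$, so these pairs lie on lines containing three points of $X$. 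Each such line carries at most $\binom{k-1}{2}$ pairs. Therefore $X$ determines at least $c'N^2$ distinct $3$-rich lines, where $c'=1/(8(\ell-1)\binom{k-1}{2})$.

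Now apply Elekes--Szab\'o \cite[Theorem~4.1]{ElekesSzabo2024} with $\Gamma_i=C$ and $H_i=X$. For $N\ge n_{\rm ES}(c',d)$, this many triple lines forces $C$ to be a cubic, contradicting $\deg C\ge4$. Taking $\varepsilon$ to be the minimum of the constants above and $N_0$ the maximum of the thresholds completes the argument.

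The main obstacle is the step that separates on-curve from off-curve blockers. Tur\'an's theorem only produces non-visibility in $A$, while Elekes--Szab\'o needs collinear triples inside $X$. The degree bound $k-2$ for the pairs blocked by a single external point is exactly what makes an $\varepsilon N$ budget of external points harmless. We should also check that the version of Elekes--Szab\'o we use counts lines with three points of $H$ on $C$ (rather than points counted with multiplicity), which our count supplies.
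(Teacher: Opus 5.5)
Your proof is correct and follows the paper's argument case by case. The cases are the line, the conic via the blocker-colouring lemma, the cubic via Theorem~\ref{thm:cubic-container}, and $\deg C\ge 4$ via Tur\'an's theorem, then discounting externally blocked pairs, then Elekes--Szab\'o. The only difference is that you bound the number of points of $X$ on a line by $k-1$ (no $k$ collinear), where the paper uses B\'ezout's bound $d$; this only makes $\varepsilon$ and the density constant depend on $k$ rather than $d$, and for $k=2$ the statement is vacuous, so the degenerate $\binom{1}{2}$ in your $c'$ is harmless.
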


\begin{proof}
Choose $\varepsilon>0$ small enough to satisfy all inequalities imposed below, and take $N_0\ge \max\{k,3(k-1)+1\}$ sufficiently large in terms of $d,k,\ell$ and the Elekes--Szab\'o threshold.

If $\deg C=1$, then $N\ge N_0\ge k$ contradicts the no-$k$-collinear hypothesis.  If $\deg C=2$, then no line meets $C$ in three points.  Applying Lemma~\ref{lem:blocker-colouring} to $V=C(\R)$ gives
\[
  \omega(V(A))\ge \frac{N}{|A\setminus C|+1}\ge \frac{N}{\varepsilon N+1},
\]
which is at least $\ell$ when $\varepsilon<1/(2\ell)$ and $N_0$ is large.

If $\deg C=3$, then $N\ge N_0>3(k-1)$, so Theorem~\ref{thm:cubic-container} applies with $s=|A\setminus C|$ and $n=N+s$.  The irreducible-cubic term gives
\[
  \omega(V(A))\ge \frac{N-13}{15(s+1)}\ge \ell
\]
for $\varepsilon<1/(30\ell)$ and $N_0$ large.

It remains to handle $4\le \deg C\le d$.  Suppose, for contradiction, that $A$ has no $\ell$ mutually visible points.  Then the ambient visibility graph induced on $X$ has no $K_\ell$.  By Tur\'an's theorem, for $N$ large the number of non-visible pairs in $X$ is at least
\[
  \frac{N^2}{4(\ell-1)}.
\]
A non-visible pair in $X$ is blocked either by a point of $X$ or by a point of $A\setminus C$.  Fix one external point $r\in A\setminus C$.  Since $C$ is irreducible and not a line, every line through $r$ meets $C$ in at most $\deg C\le d$ points.  Partitioning $X$ by the lines through $r$, the number of pairs of $X$ blocked by $r$ is at most
\[
  \frac{d-1}{2}N.
\]
Thus all external blockers together block at most
\[
  |A\setminus C|\frac{d-1}{2}N\le \varepsilon\frac{d-1}{2}N^2
\]
pairs from $X$.  Taking
\[
  \varepsilon<\frac{1}{4(d-1)(\ell-1)}
\]
leaves at least $N^2/(8(\ell-1))$ non-visible pairs whose blocker lies in $X$.

Every such internally blocked pair lies on a line containing at least three points of $X$.  Since a line meets $C$ in at most $d$ points, each $3$-rich line of $X$ accounts for at most $\binom d2$ internally blocked pairs.  Therefore $X$ determines at least
\[
  c_{d,\ell}N^2,
  \qquad
  c_{d,\ell}:=\frac{1}{8(\ell-1)\binom d2},
\]
distinct $3$-rich lines.  Applying Elekes--Szab\'o~\cite[Theorem~4.1]{ElekesSzabo2024} with $\Gamma_1=\Gamma_2=\Gamma_3=C$ and $H_1=H_2=H_3=X$, we conclude, once $N\ge N_0$, that $C\cup C\cup C=C$ is a cubic.  This contradicts $\deg C\ge4$.
\end{proof}

\begin{remark}
Theorem~\ref{thm:robust-low-degree} is the form in which the algebraic-curve case is most useful for visibility: it is ambient, not induced.  It allows a small but positive proportion of off-curve blockers, and the proof shows exactly where the cubic-container theorem becomes necessary.

We record Theorem~\ref{thm:robust-low-degree} as an independent
strengthening of Theorem~\ref{thm:bounded-degree-curve}; while
not used elsewhere in the paper, it is the natural ambient analogue
that allows a small proportion of off-curve points.
\end{remark}

\section{Sharpness and limitations of the blocker mechanism}\label{sec:sharpness}

The blocker-colouring lemma is not merely a proof device; its dependence on the number of external blockers is forced already in the simplest conic example.

\begin{proposition}[Sharpness for one blocker]\label{prop:one-blocker-sharp}
For every even integer $m\ge2$ there is a set $A\subset\R^2$ and an irreducible conic $Q$ such that $|A\cap Q|=m$, $|A\setminus Q|=1$, no four points of $A$ are collinear, and the largest mutually visible subset of $A\cap Q$ in the ambient set $A$ has size exactly $m/2$.
\end{proposition}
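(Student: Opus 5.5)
Take $Q$ to be the unit circle, $A\cap Q$ to consist of $m/2$ antipodal pairs $\{p_i,-p_i\}$, $1\le i\le m/2$, at distinct directions, and let the single extra point be the origin $o$. Then $A=\{o\}\cup\{\pm p_i\}$ and $|A\setminus Q|=1$. The proof has three short steps: collinearity, an upper bound on cliques, and a matching lower bound.

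\emph{Collinearity.} A line meets the irreducible conic $Q$ in at most two points, so any collinear set in $A$ has at most two circle points plus possibly $o$, hence at most three points. Therefore no four points of $A$ are collinear.

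\emph{Upper bound.} Consider the visibility graph restricted to $A\cap Q$. By Lemma~\ref{lem:conic-visibility}, a pair $p,q$ of circle points can only be blocked by $o$. The point $o$ lies on the open chord $(pq)^\circ$ exactly when $q=-p$. So the non-visible pairs inside $A\cap Q$ are precisely the $m/2$ antipodal pairs, which form a perfect matching. A mutually visible subset of $A\cap Q$ is therefore an independent set in this matching, and so has size at most $m/2$.

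\emph{Lower bound.} The set $\{p_1,\dots,p_{m/2}\}$ contains no antipodal pair, so every chord between two of its points misses $o$ and, as noted above, contains no other point of $A$. Hence it is mutually visible, of size exactly $m/2$. This also matches Lemma~\ref{lem:blocker-colouring} with $b=1$, which gives a cover by two cliques.

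The only point needing care is the case $m=2$: there $A\cap Q$ is a single antipodal pair, and the maximum visible set is a singleton, of size $1=m/2$, which is consistent with the statement. Beyond this, the argument is direct verification, with no substantive obstacle.
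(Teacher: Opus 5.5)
Your proposal is correct and is essentially the paper's own proof. Both use the unit circle with $m/2$ antipodal pairs and the origin as the single blocker, observe that the non-visibility graph on $A\cap Q$ is the perfect matching of antipodal pairs, and take one endpoint from each pair to obtain a visible clique of size exactly $m/2$.
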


\begin{proof}
Let $Q$ be the unit circle and let $o=(0,0)$. Choose $m/2$ antipodal pairs on $Q$, with no pair repeated, and let $X$ be the set of their $m$ endpoints. Put $A=X\cup\{o\}$.

The only collinear triples in $A$ are $o$ together with one antipodal pair, so no four points are collinear. In $X$, the two endpoints of each antipodal pair are not visible in $A$, because $o$ lies on the open segment between them. Any two non-antipodal points of $X$ are visible: their chord does not pass through $o$, and a line meets the circle in at most two points. Therefore the non-visibility graph on $X$ is a perfect matching. A visible clique in $X$ can choose at most one endpoint from each antipodal pair, so it has size at most $m/2$; choosing one endpoint from every antipodal pair gives a visible clique of size $m/2$.
\end{proof}

This example shows that the denominator $s+1$ in the local conic and cubic-patch bounds cannot be removed. It also explains why a merely positive-density cubic conclusion is insufficient for the full Big-Line--Big-Clique Conjecture: if a linear number of ambient blockers remains outside the cubic, the local lemma alone cannot force an arbitrarily prescribed fixed visible clique.

\section{The dense-orchard core forced by no visible clique}\label{sec:orchard}

This section isolates a structural obstacle to removing the ordinary-line hypothesis.  The result below is elementary but useful: the absence of a visible clique produces not just many collinear triples, but a positive-density subconfiguration in which every point is incident to linearly many $3$-rich lines.

A line is called \emph{$3$-rich} for a set $A$ if it contains at least three points of $A$.

\begin{proposition}[Dense-orchard core]\label{prop:orchard-core}
Fix $k\ge 4$ and $\ell\ge 2$.  Let $A\subset\R^2$ have $n$ points, no $k$ collinear points, and no $\ell$ mutually visible points.  If $n\ge 4(\ell-1)$, then there is a subset $A'\subset A$ with
\[
  |A'|\ge \frac{n}{8(\ell-1)\binom{k-2}{2}}
\]
such that every point of $A'$ is incident to at least
\[
  \frac{n}{24(\ell-1)\binom{k-2}{2}}
\]
distinct lines that contain at least three points of $A'$.
\end{proposition}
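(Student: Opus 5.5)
The plan is to first convert the absence of a visible $K_\ell$ into quadratically many $3$-rich lines, exactly as in the proof of Theorem~\ref{thm:bounded-degree-curve}, and then extract $A'$ by a standard greedy pruning (minimum-degree core) argument on the linear hypergraph of $3$-rich lines. Throughout I write $\alpha:=1/\bigl(4(\ell-1)\binom{k-2}{2}\bigr)$; the hypothesis $k\ge4$ guarantees $\binom{k-2}{2}\ge1$, so $0<\alpha\le 1/4$. The targets are then $|A'|\ge \alpha n/2$ and a degree bound of $\alpha n/6$.

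\textbf{Step 1: many $3$-rich lines.} Since $V(A)$ has no $K_\ell$, Tur\'an's theorem together with $n\ge4(\ell-1)$ gives at least $n^2/(4(\ell-1))$ non-visible pairs, as computed in Section~\ref{sec:bounded-degree}. A non-visible pair $\{p,q\}$ lies on a line $L$ containing a third point of $A$ strictly between $p$ and $q$. On a line with $j$ points, exactly $j-1$ pairs are visible (the consecutive ones), so exactly $\binom j2-(j-1)=\binom{j-1}{2}$ pairs are non-visible. With $j\le k-1$, this is at most $\binom{k-2}{2}$, which is sharper than the $\binom{k-1}{2}$ used earlier and explains the constant in the statement. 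Summing over the $3$-rich lines therefore shows that $A$ spans at least $\alpha n^2$ distinct $3$-rich lines.

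\textbf{Step 2: pruning.} Set $\delta:=\alpha/6$. Starting from $A_0=A$, I repeatedly delete any point lying on fewer than $\delta n$ lines that are $3$-rich for the current set. Deleting a point $p$ can only destroy the $3$-richness of lines through $p$, and by the deletion rule there are fewer than $\delta n$ of these. Since at most $n$ deletions occur, fewer than $\delta n^2$ lines are destroyed in total. Hence the final set $A'$ still spans more than $(\alpha-\delta)n^2=\tfrac56\alpha n^2>0$ lines that are $3$-rich for $A'$. In particular $A'\neq\varnothing$, and by the stopping rule every point of $A'$ lies on at least $\delta n=\alpha n/6$ lines containing at least three points of $A'$, which is the claimed degree bound.

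\textbf{Step 3: size of $A'$.} Let $m=|A'|$. Each $3$-rich line of $A'$ uses at least three of the $\binom m2$ pairs of $A'$, and distinct lines use disjoint pairs, so there are at most $m^2/6$ such lines. Combining with Step~2 gives $m^2/6\ge \tfrac56\alpha n^2$, so $m\ge\sqrt{5\alpha}\,n\ge \alpha n/2$, where the last inequality uses $\alpha\le1$.

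None of these steps is delicate. The one point needing care is Step~2, namely that deleting a point destroys only lines through that point, so the loss is controlled by the current degree rather than by the original degree. A second small point, in Step~1, is to count non-visible pairs per line as $\binom{j-1}{2}$ rather than $\binom{j}{2}$, which is what produces $\binom{k-2}{2}$ in the constants.
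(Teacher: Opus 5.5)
Your proof is correct. It shares the paper's overall skeleton (Tur\'an, then minimum-degree pruning, then a counting bound on $|A'|$), but it prunes a different object.

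The paper works in the $3$-uniform hypergraph of collinear triples:
\begin{itemize}
\item it gets $T\ge n^2/(12(\ell-1))$ triples from $\binom t2\le 3\binom t3$;
\item it deletes points lying in fewer than $n/(24(\ell-1))$ triples;
\item it converts triples at a point into lines by dividing by $\binom{k-2}{2}$, the number of triples through $p$ on a single line;
\item it bounds $|A'|$ with the crude maximum-degree estimate $\binom{k-2}{2}n$ for the triple hypergraph.
\end{itemize}
You instead spend the factor $\binom{k-2}{2}$ up front, as the exact number $\binom{j-1}{2}$ of non-visible pairs on a $j$-point line. You then prune point--line incidences directly, using the observation that deleting $p$ can only destroy $3$-rich lines through $p$. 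Your size bound comes from the linear-space fact that distinct lines use disjoint pairs.

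What your route buys is directness and a stronger statement. It gives $|A'|\ge\sqrt{5\alpha}\,n$, which is of order $n/\sqrt{(\ell-1)\binom{k-2}{2}}$ rather than $n/\bigl((\ell-1)\binom{k-2}{2}\bigr)$. Moreover, taking threshold $\delta=\alpha/2$ instead of $\alpha/6$ would triple the stated line-degree bound while still giving $|A'|\ge\sqrt{3\alpha}\,n$.

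What the paper's route buys is an intermediate fact independent of $k$: every point of $A'$ lies in at least $n/(24(\ell-1))$ collinear triples inside $A'$. This fits the many-collinear-triples framing the paper uses elsewhere. Its losses come only from the triples-to-lines conversion and the maximum-degree size estimate.
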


\begin{proof}
Since $V(A)$ has no $K_\ell$, Tur\'an's theorem gives
\[
  e(V(A))\le \left(1-\frac1{\ell-1}\right)\frac{n^2}{2}.
\]
Thus the number $M$ of non-visible pairs is at least
\[
  \binom n2-\left(1-\frac1{\ell-1}\right)\frac{n^2}{2}
  =\frac{n^2}{2(\ell-1)}-\frac n2
  \ge \frac{n^2}{4(\ell-1)},
\]
where the final inequality follows from $n\ge 4(\ell-1)$.

Every non-visible pair lies on a line containing at least three points of $A$.  If a line contains $t\ge3$ points of $A$, it contributes at most $\binom t2$ such pairs and exactly $\binom t3$ collinear triples.  Since $\binom t2\le 3\binom t3$ for every $t\ge3$, the number $T$ of unordered collinear triples in $A$ satisfies
\[
  T\ge M/3\ge \frac{n^2}{12(\ell-1)}.
\]
Set
\[
  \delta:=\frac{1}{12(\ell-1)},\qquad D_k:=\binom{k-2}{2}.
\]
Consider the $3$-uniform hypergraph $\mathcal H$ whose vertices are the points of $A$ and whose edges are the collinear triples.  Then $e(\mathcal H)\ge \delta n^2$.

Prune vertices as follows.  Repeatedly delete any vertex whose current degree in the remaining hypergraph is less than $\delta n/2$.  Each deletion removes fewer than $\delta n/2$ hyperedges, so after at most $n$ deletions fewer than $\delta n^2/2$ hyperedges have been removed.  The remaining hypergraph $\mathcal H'$ therefore has at least $\delta n^2/2$ edges.  Let $A'$ be its vertex set.  By construction, every point of $A'$ lies in at least $\delta n/2$ collinear triples supported entirely in $A'$.

A fixed point $p\in A'$ participates in at most $D_k$ collinear triples on any single line through $p$, since each such line contains at most $k-1$ points of $A$, hence at most $k-2$ other points besides $p$.  Therefore $p$ is incident to at least
\[
  \frac{\delta n/2}{D_k}=\frac{n}{24(\ell-1)D_k}
\]
distinct $3$-rich lines for $A'$.

It remains to lower-bound $|A'|$.  The degree of any point in the original triple hypergraph is at most $D_k n$: there are at most $n-1$ lines through $p$ determined by another point, and each contributes at most $D_k$ triples through $p$.  Hence
\[
  e(\mathcal H')\le \frac13 |A'|D_k n.
\]
Together with $e(\mathcal H')\ge\delta n^2/2$, this gives
\[
  |A'|\ge \frac{3\delta}{2D_k}n
  =\frac{n}{8(\ell-1)D_k},
\]
as claimed.
\end{proof}

\begin{theorem}[Dense orchard cores on bounded-degree curves collapse to cubics]\label{thm:orchard-curve-collapse}
Fix $d\ge1$, $k\ge4$, and $\rho>0$.  There is $n_0=n_0(d,k,\rho)$ such that the following holds.  Let $C\subset\R^2$ be an irreducible real algebraic curve of degree at most $d$, and let $A\subset C(\R)$ have $n\ge n_0$ points with no $k$ collinear points.  If every point of $A$ is incident to at least $\rho n$ distinct $3$-rich lines of $A$, then $C$ is a cubic.
\end{theorem}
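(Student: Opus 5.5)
The plan is a double count that produces quadratically many distinct $3$-rich lines, followed by the same application of Elekes--Szab\'o \cite[Theorem~4.1]{ElekesSzabo2024} used in the proof of Theorem~\ref{thm:bounded-degree-curve}. Degrees $1$ and $2$ are dismissed first. If $C$ is a line, then $n\ge n_0\ge k$ gives $k$ collinear points. If $C$ is an irreducible conic, no line meets $C$ in three points, so $A$ has no $3$-rich lines; this contradicts the hypothesis once $\rho n\ge1$, i.e.\ $n_0>1/\rho$. So we may assume $\deg C\ge3$, and we argue by contradiction assuming $4\le \deg C\le d$.

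\textbf{Step 1: counting lines.} Let $\mathcal L$ be the set of $3$-rich lines of $A$. Count incidences $I=\#\{(p,L):p\in A\cap L,\ L\in\mathcal L\}$. By hypothesis every point lies on at least $\rho n$ lines of $\mathcal L$, so $I\ge\rho n^2$. Each $L\in\mathcal L$ contains at most $\min(k-1,d)$ points of $A$: at most $k-1$ by the no-$k$-collinear hypothesis, and at most $d$ by B\'ezout, since the irreducible curve $C$ of degree at least $2$ contains no line. Hence
\[
  |\mathcal L|\ \ge\ \frac{\rho n^2}{\min(k-1,d)}\ =:\ c_{d,k,\rho}\,n^2 .
\]

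\textbf{Step 2: Elekes--Szab\'o.} Apply \cite[Theorem~4.1]{ElekesSzabo2024} with $\Gamma_1=\Gamma_2=\Gamma_3=C$ and $H_1=H_2=H_3=A$. With constant $c_{d,k,\rho}$ and degree bound $d$, it supplies $\eta>0$ and a threshold $n_{\rm ES}(c_{d,k,\rho},d)$. For $n\ge n_{\rm ES}$ we have $c_{d,k,\rho}n^2\ge c_{d,k,\rho}n^{2-\eta}$ distinct triple lines, which forces $C\cup C\cup C=C$ to be a cubic. This contradicts $\deg C\ge4$.

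Taking $n_0=\max\{k,\lceil 1/\rho\rceil+1,n_{\rm ES}\}$ then proves the theorem.

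The main obstacle I expect is matching hypotheses with the cited Elekes--Szab\'o statement. It may count triples of points on lines rather than distinct lines, and it may require the three point sets to have comparable sizes, which holds trivially here. If it counts collinear triples instead, I would note that each $3$-rich line carries at least one collinear triple, so the lower bound on $|\mathcal L|$ transfers directly. No visibility input is needed; the visibility hypothesis enters only through Proposition~\ref{prop:orchard-core}, which produces the $\rho n$ condition.
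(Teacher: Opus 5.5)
Your proposal is correct and follows the paper's proof essentially verbatim: the incidence double count gives quadratically many distinct $3$-rich lines, namely $\rho n^2/d$ in the paper and $\rho n^2/\min(k-1,d)$ in your version. Then Elekes--Szab\'o \cite[Theorem~4.1]{ElekesSzabo2024} with $\Gamma_1=\Gamma_2=\Gamma_3=C$ and $H_1=H_2=H_3=A$ forces $C$ to be a cubic; your explicit dismissal of the conic case and the $\min(k-1,d)$ refinement are harmless additions, since the paper lets the conic case be vacuous (a conic supports no $3$-rich lines).
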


\begin{proof}
If $C$ is a line, then $n<k$, which is impossible for $n_0\ge k$.  Thus $C$ is not a line, and every line meets $C$ in at most $d$ points by B\'ezout.  The point--rich-line incidence count is at least $\rho n^2$.  Each rich line is incident to at most $d$ points of $A$, so the number of distinct $3$-rich lines is at least $(\rho/d)n^2$.

Apply Elekes--Szab\'o~\cite[Theorem~4.1]{ElekesSzabo2024} with $\Gamma_1=\Gamma_2=\Gamma_3=C$ and $H_1=H_2=H_3=A$.  For the constant $c=\rho/d$ and degree $d$, that theorem supplies $\eta=\eta(c,d)>0$ and $n_0(c,d)$ such that $(\rho/d)n^2\ge c n^{2-\eta}$ rich triple lines force $C\cup C\cup C=C$ to be a cubic.
\end{proof}

\begin{corollary}[Concrete use of the dense-orchard core]\label{cor:orchard-core-used}
Fix $d\ge1$, $k\ge4$, and $\ell\ge2$.  There is $n_0=n_0(d,k,\ell)$ such that if $C\subset\R^2$ is an irreducible real algebraic curve of degree at most $d$ and $A\subset C(\R)$ has $n\ge n_0$ points with no $k$ collinear points and no $\ell$ mutually visible points, then $C$ is a cubic.
\end{corollary}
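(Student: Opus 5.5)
The plan is to chain Proposition~\ref{prop:orchard-core} with Theorem~\ref{thm:orchard-curve-collapse}. Let $A\subset C(\R)$ have $n$ points, no $k$ collinear points and no $\ell$ mutually visible points, with $n\ge 4(\ell-1)$. Proposition~\ref{prop:orchard-core} gives a core $A'\subset A$ with $|A'|\ge \alpha n$, where
\[
\alpha=\frac{1}{8(\ell-1)\binom{k-2}{2}} .
\]
It also guarantees that every point of $A'$ lies on at least $\beta n$ distinct lines that are $3$-rich for $A'$, with
\[
\beta=\frac{1}{24(\ell-1)\binom{k-2}{2}} .
\]
Since $A'\subset A\subset C(\R)$, the set $A'$ has no $k$ collinear points. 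Put $n'=|A'|$. Because $n'\le n$, each point of $A'$ is incident to at least $\beta n\ge \beta n'$ distinct $3$-rich lines of $A'$.

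Next, set $\rho:=\beta$, which depends only on $k$ and $\ell$, and apply Theorem~\ref{thm:orchard-curve-collapse} to the set $A'$ on the curve $C$ with parameters $d$, $k$, $\rho$. This requires $n'\ge n_0(d,k,\rho)$ from that theorem. It suffices to take
\[
n_0(d,k,\ell)\ge \max\Bigl\{4(\ell-1),\ \alpha^{-1}\, n_0^{\mathrm{Thm\,\ref{thm:orchard-curve-collapse}}}(d,k,\beta)\Bigr\},
\]
so that $n'\ge \alpha n$ clears the threshold. The theorem then yields that $C$ is a cubic, which is the claim.

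The only point needing care is a mismatch in normalization: Theorem~\ref{thm:orchard-curve-collapse} measures richness relative to the size of the set it is applied to, and Proposition~\ref{prop:orchard-core} measures it relative to the original $n$. The inequality $n'\le n$ resolves this in the right direction, so no constant is lost beyond the threshold adjustment above. I also check the hypotheses $k\ge4$ and $\ell\ge2$: they match those of Proposition~\ref{prop:orchard-core}, and $k\ge4$ is also needed by Theorem~\ref{thm:orchard-curve-collapse}. No further casework (lines or conics) is needed here, since Theorem~\ref{thm:orchard-curve-collapse} already handles those cases internally via Elekes--Szab\'o.
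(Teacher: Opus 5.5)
Your proposal is correct and matches the paper's proof: apply Proposition~\ref{prop:orchard-core}, use $|A'|\le n$ to turn the bound $\beta n$ on the number of $3$-rich lines into $\beta|A'|$, and invoke Theorem~\ref{thm:orchard-curve-collapse} on $A'\subset C(\R)$. Your choice $n_0\ge\max\{4(\ell-1),\alpha^{-1}n_0(d,k,\beta)\}$ spells out the threshold bookkeeping that the paper leaves implicit.
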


\begin{proof}
Apply Proposition~\ref{prop:orchard-core} to obtain $A'\subset A$ of size at least $\alpha n$, where $\alpha=1/(8(\ell-1)\binom{k-2}{2})$, such that every point of $A'$ is incident to at least $\gamma n$ distinct $3$-rich lines of $A'$, where $\gamma=1/(24(\ell-1)\binom{k-2}{2})$.  Since $|A'|\le n$, every point of $A'$ is incident to at least $\gamma |A'|$ such lines.  Applying Theorem~\ref{thm:orchard-curve-collapse} to $A'\subset C(\R)$ gives that $C$ is cubic.
\end{proof}

\begin{remark}
Proposition~\ref{prop:orchard-core} is not a proof of the full Big-Line--Big-Clique Conjecture.  It identifies the next missing theorem.  A positive solution to the following ambient container problem, with a sufficiently strong blocker conclusion, would combine with Theorem~\ref{thm:cubic-container} to settle the conjecture.
\end{remark}

\begin{conjecture}[Ambient cubic-container problem]\label{conj:ambient-container}
For every $k,\ell$ there are constants $\alpha>0$ and $\beta<1/(\ell-1)$ such that the following holds. If $A\subset\R^2$ has $n$ points, no $k$ collinear points, and no $\ell$ mutually visible points, then there is a real cubic $\Gamma$ and a visibility patch $W\subset\Gamma(\R)$ for which
\[
  |A\cap W|\ge \alpha n
\]
and the number of ambient blockers in $A\setminus\Gamma$ for pairs from $A\cap W$ is at most
\[
  \beta |A\cap W|.
\]
\end{conjecture}

\begin{remark}[Ambient blockers and the threshold]
The statement above is intentionally stronger than the bare positive-density cubic question: merely placing $\alpha n$ points on a cubic is not enough by itself, because the remaining $(1-\alpha)n$ points may act as external blockers.  The cubic must be an \emph{ambient} container, not just an induced-subset container.

The threshold $\beta<1/(\ell-1)$ is the one naturally suggested by
Lemma~\ref{lem:blocker-colouring}. If $X=A\cap W$ has $m$ points and at
most $\beta m$ ambient blockers outside $\Gamma$, then the patch
property and Lemma~\ref{lem:blocker-colouring} give a visible subset
of size at least
\[
  \frac{m}{\beta m+1}.
\]
As $m$ grows, this lower bound tends to $1/\beta$. Thus
$\beta<1/(\ell-1)$ forces, for sufficiently large $m$, a visible clique
of size at least $\ell$ on the patch.
\end{remark}

\section{Concluding remarks}\label{sec:remarks}

\bigskip
\noindent{\bfseries Quantitative form.}\par\nobreak\smallskip
The only large threshold in Theorem~\ref{thm:few-ordinary-linear} is the Green--Tao threshold
\[
n_{\rm GT}(K)\le\exp\exp(CK^C).
\]
This doubly exponential dependence is inherited entirely from the Green--Tao structure theorem; any quantitative improvement there would immediately improve the threshold in Theorem~\ref{thm:few-ordinary-linear}.

Once the cubic container is available, the visibility part is linear and explicit.  An irreducible cubic with $s$ off-cubic points gives a clique of size at least
\[
  \frac{n-s-13}{15(s+1)},
\]
and a line-plus-conic cubic gives a clique of size at least
\[
  \frac{n-s-(k-1)}{s+k}.
\]

\bigskip
\noindent{\bfseries Why the theorem is naturally cubic.}\par\nobreak\smallskip
The Green--Tao theorem shows that configurations with very few ordinary lines are governed by cubics.  The local proof here explains why cubics are also compatible with visibility: after removing finitely many flexes, points at infinity, vertical tangencies and singularities, an irreducible real cubic decomposes into arcs whose chords are empty with respect to the cubic itself.  Ambient blockers are then the only remaining obstruction.

\bigskip
\noindent{\bfseries Limits of the method.}\par\nobreak\smallskip
The full Big-Line--Big-Clique Conjecture would require converting the dense-orchard core in Proposition~\ref{prop:orchard-core} into an ambient cubic container.  Ordinary incidence estimates alone do not accomplish this: they allow $\Theta(n^2)$ incidences between $n$ points and $\Theta(n^2)$ constant-rich lines without forcing a single low-degree curve.  Any proof of the full conjecture through the present framework must therefore add a genuinely algebraic structural theorem.

\section*{Acknowledgements and AI-use disclosure}

AI-assisted tools were used for LaTeX typesetting, literature search, and proofreading. The author verified all mathematical statements, proofs, references, and final wording.

\end{document}